 \newtheorem{thm}{Theorem}[section]
 \newtheorem{lem}[thm]{Lemma}
 \newtheorem{prop}[thm]{Proposition}
 \theoremstyle{definition}
 \newtheorem{defn}[thm]{Definition}
 \theoremstyle{remark}
 \newtheorem{rem}[thm]{Remark}
 \numberwithin{equation}{section}
\begin{document}

%
%
%
%
%
%
%
%
%

\title[\textit{Stoch. 2S. of Navier-Stokes Type Equations}]
 {Stochastic-periodic Homogenization of Non-stationary Incompressible Navier-Stokes Type Equations}

\author[\tiny{\textsc{Franck Tchinda}}]{\textsc{Franck Tchinda}}

\address{%
University of Maroua\\
Department of Mathematics and Computer Science\\
P.O. Box 814\\
Maroua, Cameroon}

\email{takougoumfranckarnold@gmail.com}

\author[\tiny{\textsc{Joel Fotso Tachago}}]{\textsc{Joel Fotso Tachago}}

\address{%
	University of Bamenda\\
	Higher Teachers Trainning College, Department of Mathematics\\
	P.O. Box 39\\
	Bambili, Cameroon}

\email{fotsotachago@yahoo.fr}

\author[\tiny{\textsc{Joseph Dongho}}]{\textsc{Joseph Dongho}}
\address{%
	University of Maroua\\
	Department of Mathematics and Computer Science\\
	P.O. Box 814\\
	Maroua, Cameroon}
\email{joseph.dongho@fs.univ-maroua.cm}
\subjclass{35R60, 76D05, 35B27, 35B40.}
\keywords{stochastic-periodic homogenization, stochastic two-scale convergence, dynamical system, non-stationary Navier-Stokes Equations, variational formulation, anisotropic heterogeneous media.}

\date{January 5, 2024}

\begin{abstract}
In this paper, we study the stochastic-periodic homogenization of non-stationary Navier-Stokes equations on anisotropic heterogeneous media. More precisely, we are interested in the stochastic-periodic homogenization of its variational formulation.  This problematic relies on the notion of dynamical system. It is shown by the stochastic two-scale convergence method that the resulting homogenized limit equation is of the same form of this variational formulation with suitable coefficients.\footnotetext{This work is supported by MINESUP/University of Maroua}
\end{abstract}

\maketitle


\section{Introduction} \label{sect1} 

 It is well known that the flows of commonly encountered Newtonian fluids are modeled by the Navier-Stokes equations. These flows are sometimes laminar, sometimes turbulent. Unfortunately, in reality, the flows of fluids are almost always turbulent. Thus starting from two identical situations, the flow may evolve very differently. This explains its dual nature of being both deterministic and unpredictable (random). In this paper, we address the problem of the stochastic-periodic homogenization in heterogeneous media of non-stationary Navier-Stokes equations : 
\begin{equation}\label{tc11}
\begin{array}{rcll}
\rho \dfrac{\partial }{\partial t}\mathbf{u} + (\mathbf{u},\nabla)\mathbf{u} + \mathcal{E}(\mathbf{u}) + \nabla p & = & \mathbf{f} & \textup{in} \; [0,T]\times Q\times \Omega  \\
\textup{div}\,\mathbf{u} & = & 0 & \textup{in} \; [0,T]\times Q\times \Omega.
\end{array}
\end{equation}
 Let us specify the data in (\ref{tc11}). 
Let $\mathcal{T}$ be an $N$-dimensional dynamical system acting on the probability space $(\Omega, \mathscr{M}, \mu)$, $Q$ be a smooth bounded open subset of $\mathbb{R}^{N}_{x}$ (the $N$-dimensional numerical space $\mathbb{R}^{N}$ of variables $x=(x_{1},\cdots,x_{N})$), where $N$ is a given positive integer, and let $T>0$ be a real number.
 Here for all $(t,x) \in [0,T]\times Q$ and for almost all $\omega\in \Omega$, $\mathbf{u}(t, x, \omega) = (u_{1}, u_{2}, \cdots, u_{N})$ is the \textit{Eulerian velocity} of a fluid flow, $p(t, x, \omega)$ is the scalar value \textit{fluid pressures}, $\rho(x, \omega)$ is a scalar value function, $\mathbf{f}(t, x, \omega) = (f_{1}, f_{2}, \cdots, f_{N})$ is $\mathbb{R}^{N}$-value vector function  and $(\mathbf{u},\nabla)u_{k} = \sum_{i=1}^{N} u_{i}\frac{\partial}{\partial x_{i}}u_{k}, k=1,2,\cdots,N$. The \textit{elasticity} operator $\mathcal{E}$ is described later. 
\par The homogenized problems for fluid flows governed by Stokes and Navier-Stokes equations have been extensively studied by many authors (see, e.g. \cite{benou,bourgeat,signing,allair2,pak2,sor,temam}).\\
In \cite{benou}, BENSOUSSAN \& al. studied deterministic homogenization of stationary Navier-Stokes type equations in their linear form. Later NGUETSENG and SIGNING \cite{signing}, studied the periodic homogenization via $\Sigma$-convergence (see e.g. \cite{nguet2,wou2}) of the nonlinear version of this Navier-Stokes type equations. 
In \cite{allair2}, ALLAIRE study the periodic homogenization of stationary Stokes equations and Navier-Stokes equations in the perfored domains. In \cite{pak2}, PAK used the periodic two-scale convergence developed by NGUETSENG \cite{nguet1} and ALLAIRE \cite{allair1}, in order to homogenize the non-stationary Navier-Stokes equations in the form (\ref{tc11}). Note that its work takes into account only the deterministic setting in these equations. However, in 1994, BOURGEAT \cite{bourgeat} introduces the two-scale convergence in the mean, in order to homogenize the problems which depended only of a stochastic parameter. This notion will be developed later in \cite{sango}, by SANGO and WOUKENG which generalized on both periodic two-scale convergence and two-scale convergence in the mean methods, thus taking into account the deterministic and random aspects of natural phenomena. This combinaison is called \textit{stochastic two-scale convegence}. In \cite{tachago,tachago1}, TACHAGO and NNANG use this stochastic two-scale convergence method in the homogenization of Maxwell's equations with linear and nonlinear periodic conductivity. Later SANGO and WOUKENG developped in \cite{sango2} the stochastic $\Sigma$-convergence which generalized of both $\Sigma$-convergence and two-scale convergence in the mean methods. They used this method to homogenize stationary Navier-Stokes type equations.
\par Following PAK's work \cite{pak2}, we propose, in this paper, to use the stochastic two-scale converge method (see \cite{sango}) in order to homogenize the variational formulation of non-stationary Navier-Stokes equations (\ref{tc11}). The use of this method is motivated by the fact that in our case, the Navier-Stokes system takes into account the deterministic and random parameters at the same time. To do this, we start by constructing the \hbox{\LARGE$\epsilon$}-model for (\ref{tc11}) : 
\begin{equation}\label{tc12}
\begin{array}{rcl}
\rho_{m}\dfrac{\partial}{\partial t}\mathbf{u}_{m}^{\epsilon} + (\mathbf{u}_{m}^{\epsilon}, \nabla)\mathbf{u}_{m}^{\epsilon} + \mathcal{E}^{\varepsilon}_{m}(\mathbf{u}_{m}^{\epsilon}) + \nabla p_{m}^{\epsilon}& = &\mathbf{f}_{m},  \quad \textup{in} \;\, Q_{m}^{\epsilon}\times\Omega,  \\
\textup{div}\mathbf{u}_{m}^{\epsilon}(\cdot,\omega)& =& 0,  \;\, \textup{for\,a.e.}\; \omega\in \Omega 
\end{array}   
\end{equation}
with additional boundary conditions that we will be given in section \ref{sect4}. Here,
 $$\mathbf{u}^{\epsilon}_{m} \in L^{2}\left( [0,T] ; L^{2}(\Omega, H^{1}(Q^{\epsilon}_{m}))^{N} \right) \cap \mathcal{C}_{W}\left( [0,T] ; L^{2}(Q^{\epsilon}_{m}\times\Omega)^{N} \right),$$ 
 $$p^{\epsilon}_{m} \in L^{2}\left( [0,T] ; L^{2}(\Omega, H^{1}(Q^{\epsilon}_{m})) \right),$$
  $$\mathbf{f}_{m} \in L^{2}\left( [0,T] ; L^{2}(Q^{\epsilon}_{m}\times\Omega)^{N} \right),$$
  where $m=1, 2$ and $\epsilon>0$ is a small parameter. These functions spaces are described in section \ref{sect4}. The elasticity operator $\mathcal{E}^{\varepsilon}_{m}$, for  $m=1,2$,
is defined by 
$$\mathcal{E}^{\varepsilon}_{m}(\mathbf{u})_{i} = - \sum_{j=1}^{N} \sum_{k,l=1}^{N} \partial_{j}\, a^{m}_{ijkl}\left(x, \mathcal{T}\left(\frac{x}{\epsilon} \right)\omega, \frac{x}{\epsilon^{2}} \right) \varepsilon_{kl}(\mathbf{u}),\; 1 \leq i \leq N,$$
 where $a_{ijkl}$ is a positive definite symmetric \textit{elasticity} tensor and 
  the strain operator $\varepsilon$ is such that 
  $$\varepsilon_{kl}(\textbf{u}) \equiv \frac{1}{2} \left(\frac{\partial}{\partial x_{k}}u_{l} + \frac{\partial}{\partial x_{l}}u_{k} \right),\; 1\leq k,l \leq N.$$ 
 Thus the variational formulation of (\ref{tc12}) (see section \ref{sect4}) is given by 
\begin{equation}\label{c1tc9}
\begin{array}{l}
\sum_{m=1}^{2} \left[ \int_{Q^{\epsilon}_{m}\times\Omega}\rho_{m}\dfrac{\partial}{\partial t}\mathbf{u}^{\epsilon}_{m}\cdot\mathbf{v}_{m} + (\mathbf{u}^{\epsilon}_{m}, \nabla)\mathbf{u}^{\epsilon}_{m}\cdot\mathbf{v}_{m} \, dxd\mu + e^{\varepsilon}_{m}(\mathbf{u}_{m}, \mathbf{v}_{m}) \right] \\

\\
= \sum_{m=1}^{2} \int_{Q^{\epsilon}_{m}\times\Omega} \mathbf{f}_{m}\cdot\mathbf{v}_{m}\,dxd\mu
\end{array}
\end{equation}
and
\begin{equation}\label{c1tc10}
\sum_{m=1}^{2} \left[ \int_{Q^{\epsilon}_{m}\times\Omega} (\textup{div}\mathbf{u}^{\epsilon}_{m})\varphi_{m}\,dxd\mu \right] = 0
\end{equation} 
for all divergence free vector fields $\mathbf{v}_{m} \in \mathcal{C}^{1}\left( [0,T] ; L^{2}(\Omega, H^{1}(Q^{\epsilon}_{m}))^{N} \right)$, $m=1,2$ with $\mathbf{v}_{1} = \mathbf{v}_{2}$ on $\Gamma^{\epsilon}_{12}\times\Omega$ and all $\varphi_{m} \in \mathcal{C}^{1}\left( [0,T] ; L^{2}(\Omega, H^{1}(Q^{\epsilon}_{m}))^{N} \right)$, $m=1,2$ with 
$\varphi_{1} = \varphi_{2}$ on $\Gamma^{\epsilon}_{12}\times\Omega$. The operator $e^{\varepsilon}_{m}$ and the boundary $\Gamma^{\epsilon}_{12}$ are described in section \ref{sect4}. Moreover, taking into account the boundary conditions on $\mathbf{u}_{m}^{\epsilon}$ and $\mathbf{v}_{m}$, $m=1,2$, we show that the \hbox{\LARGE$\epsilon$}-weak formulation (\ref{c1tc9})-(\ref{c1tc10}) can be displayed as :   
\begin{equation}\label{c1rem11}
\begin{array}{l}
\int_{0}^{T}\int_{Q\times\Omega} \chi_{1}^{\epsilon}(x)  \rho_{1}(x,\omega)\mathbf{u}_{1}^{\epsilon}(t,x,\omega)\cdot\dfrac{\partial}{\partial t}\mathbf{v}^{\epsilon}_{1}(t,x,\omega)  +  \int_{0}^{T} e^{\varepsilon}_{1}(\mathbf{u}_{1}(t), \mathbf{v}_{1}^{\epsilon}(t))\,dt  \\
\qquad + \int_{0}^{T}\int_{Q\times\Omega} \chi_{1}^{\epsilon}(x) \{ \mathbf{u}^{\epsilon}_{1}(t,x,\omega)\otimes\mathbf{u}^{\epsilon}_{1}(t,x,\omega)\}\cdot\nabla \mathbf{v}_{1}^{\epsilon}(t,x,\omega)  dxd\mu dt \\
\qquad +\int_{0}^{T}\int_{Q\times\Omega} \chi_{2}^{\epsilon}(x)  \rho_{2}(x,\omega)\mathbf{u}_{2}^{\epsilon}(t,x,\omega)\cdot\dfrac{\partial}{\partial t}\mathbf{v}^{\epsilon}_{2}(t,x,\omega)  +  \int_{0}^{T} e^{\varepsilon}_{2}(\mathbf{u}_{2}(t), \mathbf{v}_{2}^{\epsilon}(t))\,dt  \\
\qquad + \int_{0}^{T}\int_{Q\times\Omega} \chi_{2}^{\epsilon}(x) \{ \mathbf{u}^{\epsilon}_{2}(t,x,\omega)\otimes\mathbf{u}^{\epsilon}_{2}(t,x,\omega)\}\cdot\nabla \mathbf{v}_{2}^{\epsilon}(t,x,\omega)  dxd\mu dt \\

\\
= \int_{Q_{1}^{\epsilon}\times\Omega} \rho_{1}(x,\omega)\mathbf{u}_{1}^{\epsilon}(0,x,\omega)\cdot\mathbf{v}_{1}^{\epsilon}(0,x,\omega) dxd\mu \\
\qquad + \int_{Q_{2}^{\epsilon}\times\Omega} \rho_{2}(x,\omega)\mathbf{u}_{2}^{\epsilon}(0,x,\omega)\cdot\mathbf{v}_{2}^{\epsilon}(0,x,\omega) dxd\mu \\
\qquad +\int_{0}^{T}\int_{Q\times\Omega} \chi_{1}^{\epsilon}(x) \mathbf{f}_{1}(t,x,\omega)\cdot\mathbf{v}_{1}^{\epsilon}(t,x,\omega) \,dxd\mu dt \\
\qquad + \int_{0}^{T}\int_{Q\times\Omega} \chi_{2}^{\epsilon}(x) \mathbf{f}_{2}(t,x,\omega)\cdot\mathbf{v}_{2}^{\epsilon}(t,x,\omega) \,dxd\mu dt,
\end{array}
\end{equation}
where $\chi_{m}^{\epsilon}(x)=\chi_{m}\left(\frac{x}{\epsilon^{2}}\right)$, with $\chi_{m}(y)$ the characteristic function of $Y_{m}$ for $m=1,2$ ($Y$ is given in complementary parts $Y_{1}$ and $Y_{2}$, see section \ref{sect4}). \\
 After this construction,  we are interested in the stochastic-periodic homogenization of the sequence of solutions of the variational problem (\ref{c1rem11}), i.e. the analysis of the asymptotic behaviour of the sequence of the solutions $\mathbf{u}^{\epsilon}_{m}, \,m=1,2$ when $\epsilon \rightarrow 0$. Thus, using the stochastic two-scale convergence method and referring to the notations in section \ref{sect4}, one of our fundamental results is the following : 
\par \textit{The sequence of solution $\mathbf{u}^{\epsilon}$ for the \hbox{\LARGE$\epsilon$}-weak formulation (\ref{c1rem11})  admits a subsequence still denoted  $\mathbf{u}^{\epsilon}$ such that, as $\epsilon\to 0$, we have : 
	\begin{itemize}
		\item[i)] $\mathbf{u}_{\epsilon} \rightarrow \mathbf{u}$ stoch. in $L^{2}(Q\times\Omega)^{N}$-weak 2s (componentwise),
		\item[ii)]  $\textup{div}\mathbf{u} \rightarrow \textup{div}\mathbf{u} + \textup{div}_{\omega}\mathbf{u}_{1} + \textup{div}_{y}\mathbf{u}_{2}$ stoch. in $L^{2}(Q\times\Omega)$-weak 2s,
		\item[iii)]  $\mathbf{e}(\mathbf{u}^{\epsilon}) \rightarrow \mathbf{e}(\mathbf{u}) + \mathbf{e}(\mathbf{u}_{1})_{\omega} + \mathbf{e}(\mathbf{u}_{2})_{y}$ stoch. in $L^{2}(Q\times\Omega)^{N}$-weak 2s (componentwise), with $\mathbf{e}(\mathbf{u})_{ij} = \varepsilon_{ij}(\mathbf{u}) = \frac{1}{2}\left( \dfrac{\partial \mathbf{u}_{i}}{\partial x_{j}} + \dfrac{\partial \mathbf{u}_{j}}{\partial x_{i}} \right)$, $\mathbf{e}(\mathbf{u})_{y} = \varepsilon_{ij}(\mathbf{u}) =  \frac{1}{2}\left( \dfrac{\partial \mathbf{u}_{i}}{\partial y_{j}} + \dfrac{\partial \mathbf{u}_{j}}{\partial y_{i}} \right)$, $\mathbf{e}(\mathbf{u})_{\omega} = \frac{1}{2}\left( \overline{D}^{\omega}_{j}\mathbf{u}_{i} + \overline{D}^{\omega}_{i}\mathbf{u}_{j} \right)$, $1\leq i,j\leq N$,
	\end{itemize}
	for some $\mathbf{u}(x,\omega) \in H^{1}(Q ; L^{2}_{nv}(\Omega))^{N}$, $\mathbf{u}_{1} \in L^{2}(Q ; H^{1}_{\#}(\Omega))^{N}$ and $\mathbf{u}_{2} \in L^{2}(Q\times\Omega ; H^{1}_{\#}(Y))^{N}$. Moreover we have $\textup{div} \mathbf{u}=0$, $\textup{div}_{\omega} \mathbf{u}_{1}=0$, $\textup{div}_{y} \mathbf{u}_{2}=0$ and $(\mathbf{u}, \mathbf{u}_{1}, \mathbf{u}_{2})$  are the solutions of the following homogenized weak formulation : }
\begin{equation}\label{c1rem13}
\begin{array}{l}
\int_{0}^{T}\int_{Q\times\Omega} \bigg\{  \rho(x,\omega) \dfrac{\partial}{\partial t}\mathbf{u}(t,x,\omega)\cdot \mathbf{v}(t,x,\omega) \\
\qquad\qquad \quad + \{ \mathbf{u}(t,x,\omega)\otimes\mathbf{u}(t,x,\omega)\}\cdot\nabla \mathbf{v}(t,x,\omega) \bigg\}dxd\mu dt  \\
\qquad +  \int_{0}^{T} e(\mathbf{u}(t) + \mathbf{u}_{1}(t) + \mathbf{u}_{2}(t), \mathbf{v}(t))\,dt \\

\\ 
= \int_{0}^{T}\int_{Q\times\Omega} \mathbf{f}(t,x,\omega)\cdot\mathbf{v}(t,x,\omega) \,dxd\mu dt
\end{array}
\end{equation}
\textit{with the effective coefficients given by}
\begin{equation}
\begin{array}{l}
\rho(x,\omega) = |Y_{1}|\rho_{1}(x,\omega) + |Y_{2}|\rho_{2}(x,\omega), \\

\\
\mathbf{f}(x,\omega) = |Y_{1}|\mathbf{f}_{1}(x,\omega) + |Y_{2}|\mathbf{f}_{2}(x,\omega)
\end{array}
\end{equation}
\textit{and the homogenized elasticity bilinear form is defined by,} 
\begin{equation}
\begin{array}{l}
e(\mathbf{u} + \mathbf{u}_{1} + \mathbf{u}_{2}, \mathbf{v}(t)) 
\equiv \sum_{i,j,k,l=1}^{N} \int_{Y}\int_{Q\times\Omega} a_{ijkl}(x,\omega,y) \bigg\{ \varepsilon_{kl}(\mathbf{u}) \\

\\
\qquad\qquad\qquad \qquad\qquad\qquad + \varepsilon_{kl}^{\omega}(\mathbf{u}_{1}) + \varepsilon_{kl}^{y}(\mathbf{u}_{2})  \bigg\}\varepsilon_{ij}(\mathbf{v})\,dxd\mu dy,
\end{array}
\end{equation}
\textit{where the corresponding effective elasticity tensor is }
\begin{equation}
a_{ijkl}(x,\omega,y) = \chi_{1}(y)a^{1}_{ijkl}(x,\omega,y) + \chi_{2}(y)a^{2}_{ijkl}(x,\omega,y).
\end{equation}
\par The paper is divided into sections each revolving around a specific aspect.
Section \ref{sect2} dwells on some preliminary results on stochastic two-scale convergence, and we prove a compactness theorem which is useful to deal with time-dependent problems. In section \ref{sect4}, we start by constructing the \hbox{\LARGE$\epsilon$}-model for the non-stationary Navier-Stokes equations (\ref{tc11}) and its weak formulation. Indeed, we present terminology to explain well-posedness and a-priori estimates. Lastly, the homogenized equations for the \hbox{\LARGE$\epsilon$}-weak formulation of non-stationary Navier-Stokes equations are investigated.


\section{Fundamentals of stochastic two-scale convergence}\label{sect2}

 Throughout this paper, all the vector spaces are assumed to be real vector spaces, and the scalar functions are assumed real valued.
We refer to  \cite{bourgeat}, \cite{sango}, \cite{sango2} for notations. \\
$Q$ is a open bounded set of $\mathbb{R}^{N}$, integer $N>1$ and
$|Q|$ denotes the volume of $Q$ with respect to the Lebesgue mesure.\\
$Y=[0,1]^{N}$ denote the unit cube of $\mathbb{R}^{N}$.\\
$E=(\epsilon_{n})_{n\in\mathbb{N}}$ is a \textit{fundamental sequence}. \\
$\mathcal{D}(Q)=\mathcal{C}^{\infty}_{0}(Q)$ is the vector space of smooth functions with compact support in $Q$. \\
$\mathcal{C}^{\infty}(Q)$ is the vector space of smooth functions on $Q$. \\
$\mathcal{K}(Q) $ is the vector space of continuous functions with compact support in $Q$. \\
$L^{p}(Q)$, integer $p \in [1,+\infty]$, is Lebesgue space of functions on $Q$.  \\
$W^{1,p}(Q) = \{ v \in L^{p}(Q) \, : \, \frac{\partial v}{\partial x_{i}} \in L^{p}(Q), \; 1 \leq i \leq N \}$, where derivatives are taken in the weak sense, is classical Sobolev's space of functions and $W^{1,2}(Q) \equiv H^{1}(Q)$. \\
$W^{1,p}_{0}(Q)$ the set of functions in $W^{1,p}(Q)$ with zero boundary condition and $W^{1,2}_{0}(Q) \equiv H^{1}_{0}(Q)$.\\
$\nabla$ or $D$ (resp. $\nabla_{y}$ or $D_{y}$) denote the (classical) gradient operator on $Q$ (resp. on $Y$). \\
 $\textup{div}$ (resp. $\textup{div}_{y}$) the (classical) divergence operator on $Q$ (resp. on $Y$).\\
If $F(\mathbb{R}^{N})$ is a given function space, we denote by $F_{per}(Y)$ the space of functions in $F_{loc}(\mathbb{R}^{N})$ that are $Y$-periodic, and by $F_{\#}(Y)$ those functions in $F_{per}(Y)$ with mean value zero. \\
$(\Omega, \mathscr{M}, \mu)$ : measure space with probability measure $\mu$. \\
$\{\mathcal{T}(y): \Omega\rightarrow \Omega \, , \, y \in \mathbb{R}^{N} \}$ is an $N$-dimensional dynamical system on $\Omega$  \\
$L^{p}(\Omega)$, integer $p \in [1,+\infty]$, is Lebesgue space of functions on $\Omega$.  \\
$L^{p}_{nv}(\Omega)$ is the set of all $\mathcal{T}$-invariant functions in $L^{p}(\Omega)$.\\
$D_{i}^{\omega} : L^{p}(\Omega)\rightarrow L^{p}(\Omega)$, $1 \leq i \leq N$, $\omega\in\Omega$ is the $i$-th stochastic derivative operator. \\
$D_{\omega} = (D_{1}^{\omega}, \cdots, D_{N}^{\omega})$ is the stochastic gradient operator on $\Omega$.\\
$W^{1,p}(\Omega) = \left\{ f \in L^{p}(\Omega) : D^{\omega}_{i}f \in L^{p}(\Omega) \, (1 \leq i \leq N) \right\}$,
where $D^{\omega}_{i}f$ is taken in the distributional sense on $\Omega$ and $W^{1,2}(\Omega) \equiv H^{1}(\Omega)$.\\
 $W_{\#}^{1,p}(\Omega)$ is the separated completion of $\mathcal{C}^{\infty}(\Omega)$ in $L^{p}(\Omega)$  with respect to the norm $\|f\|_{\#,p} = \left( \sum_{i=1}^{N} \|D^{\omega}_{i}f\|^{p}_{L^{p}(\Omega)} \right)^{\frac{1}{p}}$. \\
 $\textup{div}_{\omega}$ (the formal adjoint operator of $D_{\omega}$) is the stochastic divergence operator on $\Omega$. 
  
\subsection{Preliminary on dynamical systems and ergodic theory}

Let $(\Omega, \mathscr{M}, \mu)$ be a measure space with probability measure $\mu$. We define an $N$-dimensional dynamical system on $\Omega$ as a family $\{ \mathcal{T}(y) : y \in \mathbb{R}^{N} \}$	of invertible maps,
\begin{equation}
\mathcal{T}(y) : \Omega \longrightarrow \Omega
\end{equation}
such that for each $y \in \mathbb{R}^{N}$ both $\mathcal{T}(y)$ and $\mathcal{T}(y)^{-1}$ are measurable, and such that the following properties hold : 
\begin{itemize}
	\item[(i)]\textit{(group property)} $\mathcal{T}(0)=$ identity map on $\Omega$ and for all $y_{1},y_{2} \in \mathbb{R}^{N}$,
	\begin{equation}
	\mathcal{T}(y_{1}+y_{2})=\mathcal{T}(y_{1})\circ \mathcal{T}(y_{2}),
	\end{equation}
	"$\circ$" being the usual composition of mappings, and $0$ the origin in $\mathbb{R}^{N}$,
	\item[(ii)]\textit{(invariance)} for each $y \in \mathbb{R}^{N}$, the map $\mathcal{T}(y) : \Omega \rightarrow \Omega$ is measurable on $\Omega$ and $\mu$-measure preserving, i.e. $\mu(\mathcal{T}(y)F)=\mu(F)$, for all $F \in \mathscr{M}$,
	\item[(iii)]\textit{(measurability)} for each $F \in \mathscr{M}$, the set $\{ (y,\omega) \in \mathbb{R}^{N}\times \Omega : \mathcal{T}(y)\omega \in F \}$ is measurable with respect to the product $\sigma$-algebra $\mathscr{L}\otimes \mathscr{M}$, where $\mathscr{L}$ is the $\sigma$-algebra of Lebesgue measurable sets. 
\end{itemize}
If $\Omega$ is a compact topological space, by a continuous $N$-dynamical system on $\Omega$, we mean any $N$-dynamical system on $\Omega$ above the following condition :
\begin{itemize}
	\item[(iv)] \textit{(continuity)} the mapping $(y, \omega) \mapsto \mathcal{T}(y)\omega$ is continuous from $\mathbb{R}^{N}\times \Omega$ to $\Omega$.
\end{itemize} 

Given $1 \leq p \leq +\infty$, a dynamical system $\{ \mathcal{T}(y) : \Omega \rightarrow \Omega, \, y \in \mathbb{R}^{N} \}$ induces a strongly continuous $N$-parameter group of isometries $U(y) : L^{p}(\Omega)\rightarrow L^{p}(\Omega)$, $y \in \mathbb{R}^{N}$, defined by : $(U(y)f)(\omega) = f(\mathcal{T}(y)\omega)$, $f \in L^{p}(\Omega)$ ; for more details, see, e.g., \cite{jikov}, \cite{pankov}. In particular the strong continuity is expressed by $\|U(y)f - f\|_{L^{p}(\Omega)} \rightarrow 0$ as $|y|\to 0$. \\
The $i$-th stochastic derivative $D_{i}^{\omega} : L^{p}(\Omega)\rightarrow L^{p}(\Omega)$, $1 \leq i \leq N$ is an unbounded linear mapping of domain 
\begin{equation*}
\mathbf{D}_{i} = \left\{ f \in L^{p}(\Omega) : \lim_{\iota \to 0} \dfrac{U(\iota e_{i})f - f}{\iota} \, \textup{exists\, in}\, L^{p}(\Omega) \right\},
\end{equation*} 
$(e_{i})$ being the canonical basis of $\mathbb{R}^{N}$, such that for all $f \in \mathbf{D}_{i}$, 
\begin{equation*}
D_{i}^{\omega}f(\omega) = \lim_{\iota \to 0} \dfrac{f(\mathcal{T}(\iota e_{i})\omega) - f(\omega)}{\iota},
\end{equation*}
for almost all $\omega \in \Omega$. \\
Higher order stochastic derivatives can be recalled analogously by setting $D_{\omega}^{\alpha} = D_{1}^{\alpha_{1},\omega}\circ \cdots \circ D_{N}^{\alpha_{N},\omega}$ for every multi-index $\alpha = (\alpha_{1}, \cdots, \alpha_{N}) \in \mathbb{N}^{N}$, where $D_{i}^{\alpha_{i},\omega} = D_{i}^{\omega} \circ \cdots \circ D_{i}^{\omega}$ ($\alpha_{i}$-times). \\
This being so, put $\mathcal{D}_{p}(\Omega) = \cap_{i=1}^{N}\mathbf{D}_{i}$, define 
\begin{equation*}
\mathcal{D}_{p}^{\infty}(\Omega) = \left\{ f \in L^{p}(\Omega) : D_{\omega}^{\alpha}f \in \mathcal{D}_{p}(\Omega) \, \textup{for \, all}\, \alpha \in \mathbb{N}^{N}  \right\},
\end{equation*}
and thanks to \cite{andre}, it can be shown that $\mathcal{D}_{\infty}^{\infty}(\Omega) \equiv \mathcal{C}^{\infty}(\Omega)$ is dense in $L^{p}(\Omega)$ for $1\leq p < +\infty$. \\
Endowed with a suitable locally convex topology, induced by the family of seminorms $	N_{n}(f) = \sup_{|\alpha|\leq n} \sup_{\omega\in\Omega} |D_{\infty}^{\alpha}f(\omega)|$, where $|\alpha|= \alpha_{1}+ \cdots + \alpha_{N}$, $\mathcal{C}^{\infty}(\Omega)$ is a Fr\'{e}chet space. Any continuous linear form on $\mathcal{C}^{\infty}(\Omega)$ is referred to as a stochastic distribution ; and the space $(\mathcal{C}^{\infty}(\Omega))'$ of all stochastic distributions is endowed with the strong dual topology. \\
The stochastic weak derivative of multi-index $\alpha$ ($\alpha \in \mathbb{N}^{N}$) of an element $S \in (\mathcal{C}^{\infty}(\Omega))'$ is the stochastic distribution $D^{\alpha}_{\omega}S$ given by $\langle D^{\alpha}_{\omega}S, \varphi \rangle = (-1)^{|\alpha|} \langle S, D^{\alpha}_{\omega}\varphi \rangle$, $\varphi \in \mathcal{C}^{\infty}(\Omega)$. Moreover, by the above density, $L^{p}(\Omega)$, $1\leq p < +\infty$ is a subspace of $(\mathcal{C}^{\infty}(\Omega))'$ with continuous embedding. So, the stochastic derivative of $f \in L^{p}(\Omega)$ exists with $\langle D^{\alpha}_{\omega}f, \varphi \rangle = (-1)^{|\alpha|} \int_{\Omega} f D^{\alpha}_{\omega}\varphi d\mu$ for all  $\varphi \in \mathcal{C}^{\infty}(\Omega)$. Therefore, we define the Sobolev on $\Omega$ as follows (see \cite{sango}) : 
\begin{equation*}
W^{1,p}(\Omega) = \left\{ f \in L^{p}(\Omega) : D^{\omega}_{i}f \in L^{p}(\Omega) \, (1 \leq i \leq N) \right\},
\end{equation*}
where $D^{\omega}_{i}f$ is taken in the distribution sense on $\Omega$. Equiped with the norm 
\begin{equation*}
\|f\|_{W^{1,p}(\Omega)} = \left( \|f\|^{p}_{L^{p}(\Omega)} + \sum_{i=1}^{N}\|D_{i}^{\omega}f\|^{p}_{L^{p}(\Omega)} \right)^{\frac{1}{p}},  \quad f \in W^{1,p}(\Omega),
\end{equation*} 
$W^{1,p}(\Omega)$ is a Banach space. Instead of $W^{1,p}(\Omega)$ we will be concerned with one of its seminormed subspace. At this juncture, let us recall that : 
\par  \textit{ (v) (property)} Given $f \in \mathcal{D}_{1}^{\infty}(\Omega)$ and for a.e. $\omega \in \Omega$, the function $y \rightarrow f(\mathcal{T}(y)\omega)$ lies in $\mathcal{C}^{\infty}(\mathbb{R}^{N})$ and $D^{\alpha}_{y}f(\mathcal{T}(y)\omega) = D^{\alpha}_{\omega}f(\mathcal{T}(y)\omega)$, for any $\alpha \in \mathbb{N}^{N}$. 
\par \textit{(vi) (definition)} By $f \in L^{p}(\Omega)$, $1 \leq p \leq \infty$, is $T$-invariant is meant for any $y \in \mathbb{R}^{N}$, $f(\mathcal{T}(y)\omega) = f(\omega)$ for  a.e. $\omega \in \Omega$. 
Denoting by  $L^{p}_{nv}(\Omega)$, the set of all $\mathcal{T}$-invariant functions in $L^{p}(\Omega)$, the dynamical system $\mathcal{T}$ is termed to be ergodic if $L^{p}_{nv}(\Omega)$ is the set of constant functions. For a.e. $\omega \in \Omega$, the function $y \rightarrow f(\mathcal{T}(y)\omega)$, $y \in \mathbb{R}^{N}$, is called a realization of $f$ and the mapping $(y,\omega) \rightarrow f(\mathcal{T}(y)\omega)$ is called a stationary process. The process is said to be stationary ergodic if the dynamical system $\mathcal{T}$ is ergodic.  
\par  \textit{(vii) (property)} Given $f \in L^{1}(\Omega)$, one has $f \in L^{1}_{nv}(\Omega)$ if and only if $D^{\omega}_{i}f = 0$ for each $1 \leq i \leq N$. \\
According to \textit{(vii)}, we consider $\mathcal{C}^{\infty}(\Omega)$ provided with the seminorm 
\begin{equation*}
\|f\|_{\#,p} = \left( \sum_{i=1}^{N} \|D^{\omega}_{i}f\|^{p}_{L^{p}(\Omega)} \right)^{\frac{1}{p}}, \quad f \in \mathcal{C}^{\infty}(\Omega), \, 1 < p < \infty. 
\end{equation*} 
So topologized, $\mathcal{C}^{\infty}(\Omega)$ is in general nonseparated an noncomplete. We denote by $W_{\#}^{1,p}(\Omega)$ the separated completion of $\mathcal{C}^{\infty}(\Omega)$ and by $I_{p}$ the canonical mapping of $\mathcal{C}^{\infty}(\Omega)$ into its separated completion (see, e.g., \cite[Chapter II]{barki}). Furthermore, as pointed out in \cite{sango} (see also \cite[Chapter II]{barki}), the distributional stochastic derivative $D^{\omega}_{i}$, $1 \leq i \leq N$, viewed as a mapping of $\mathcal{C}^{\infty}(\Omega)$ into $L^{p}(\Omega)$ extends to a unique continuous linear mapping, still denoted by $\overline{D}^{\omega}_{i}$, of $W_{\#}^{1,p}(\Omega)$ into $L^{p}(\Omega)$ such that $\overline{D}^{\omega}_{i}\circ I_{p}(v) = D^{\omega}_{i}v$ for $v \in \mathcal{C}^{\infty}(\Omega)$ and 
\begin{equation*}
\|u\|_{W_{\#}^{1,p}(\Omega)} = \left( \sum_{i=1}^{N} \|\overline{D}^{\omega}_{i}u\|^{p}_{L^{p}(\Omega)} \right)^{\frac{1}{p}}, \quad \textup{for} \, u \in W_{\#}^{1,p}(\Omega). 
\end{equation*} 
Moreover, $W_{\#}^{1,p}(\Omega)$ is a reflexive Banach space by the fact that the stochastic gradient $\overline{D}_{\omega} = (\overline{D}_{1}^{\omega}, \cdots, \overline{D}_{N}^{\omega})$ sends isometrically $W_{\#}^{1,p}(\Omega)$ into $L^{p}(\Omega)^{N}$. By duality, the operator $\textup{div}_{\omega} : L^{p'}(\Omega)^{N} \rightarrow (W_{\#}^{1,p}(\Omega))'$, $(p' = \frac{p}{p-1})$ defined by 
\begin{equation*}
\langle \textup{div}_{\omega}u, v \rangle = - \langle u, \overline{D}_{\omega}v \rangle =  - \sum_{i=1}^{N} \int_{\Omega} u_{i} \overline{D}_{i}^{\omega}v \,d\mu
\end{equation*}
$(u = (u_{i}) \in L^{p'}(\Omega)^{N}, v \in W_{\#}^{1,p}(\Omega))$ naturally extends the stochastic divergence operator in $\mathcal{C}^{\infty}(\Omega)$, and we have the following fundamental lemma. 
\begin{lem}
	Let $\mathbf{v} = (v_{i}) \in L^{p}(\Omega)^{N}$ such that $\sum_{i=1}^{N} \int_{\Omega} \mathbf{v}\cdot\mathbf{g}\,d\mu = 0$ for all $\mathbf{g} \in \mathcal{V}^{\omega}_{\textup{div}} = \{ \mathbf{f} = (f_{i})\in \mathcal{C}^{\infty}(\Omega)^{N} : \textup{div}_{\omega}\mathbf{f}=0 \}$. Then $\textup{curl}_{\omega}\mathbf{v}=0$ and there exists $u \in W_{\#}^{1,p}(\Omega)$ such that $\mathbf{v} = \overline{D}_{\omega}u$.
\end{lem}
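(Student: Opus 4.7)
The approach mirrors the classical de Rham theorem in the stochastic setting: I would decompose the claim into a \emph{curl-free} step and a \emph{potential-construction} step, the latter being the main obstacle.

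For the curl-free part, for each pair $(i,j)$ and each $\varphi\in\mathcal{C}^{\infty}(\Omega)$, I would plug into the hypothesis the smooth field $\mathbf{g}=\mathbf{g}^{ij,\varphi}\in\mathcal{C}^{\infty}(\Omega)^{N}$ whose $i$-th component is $D^{\omega}_{j}\varphi$, whose $j$-th component is $-D^{\omega}_{i}\varphi$, and whose other components vanish. Since the one-parameter groups generated by distinct coordinates of $\mathcal{T}$ commute, the operators $D^{\omega}_{i}$ and $D^{\omega}_{j}$ commute on $\mathcal{C}^{\infty}(\Omega)$, so $\textup{div}_{\omega}\mathbf{g}=D^{\omega}_{i}D^{\omega}_{j}\varphi-D^{\omega}_{j}D^{\omega}_{i}\varphi=0$ and $\mathbf{g}\in\mathcal{V}^{\omega}_{\textup{div}}$. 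Expanding $\int_{\Omega}\mathbf{v}\cdot\mathbf{g}\,d\mu=0$ and invoking the definition of the distributional stochastic derivative yields $\langle D^{\omega}_{i}v_{j}-D^{\omega}_{j}v_{i},\varphi\rangle=0$ for every $\varphi$, i.e.\ $\textup{curl}_{\omega}\mathbf{v}=0$ in $(\mathcal{C}^{\infty}(\Omega))'$.

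For the potential, I would use duality. Since $\overline{D}_{\omega}$ is, by the construction recalled in the excerpt, an isometric embedding of $W^{1,p}_{\#}(\Omega)$ into $L^{p}(\Omega)^{N}$, its range $G_{p}$ is a closed subspace. By the bipolar theorem it then suffices to show that $\mathbf{v}$ annihilates the $L^{p'}$-annihilator of $G_{p}$, which by the very definition of $\textup{div}_{\omega}$ is $\mathcal{K}_{p'}:=\{\mathbf{w}\in L^{p'}(\Omega)^{N}\,:\,\textup{div}_{\omega}\mathbf{w}=0\}$. The hypothesis only tells us that $\mathbf{v}$ annihilates the smooth divergence-free fields $\mathcal{V}^{\omega}_{\textup{div}}\subset\mathcal{K}_{p'}$, so the main obstacle is the density of $\mathcal{V}^{\omega}_{\textup{div}}$ in $\mathcal{K}_{p'}$ for the $L^{p'}$ norm.

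I would establish that density by regularisation along the group action. For $\mathbf{w}\in\mathcal{K}_{p'}$ and a standard nonnegative mollifier $(\eta_{\epsilon})\subset\mathcal{D}(\mathbb{R}^{N})$, set
\[
\mathbf{w}_{\epsilon}(\omega)=\int_{\mathbb{R}^{N}}\eta_{\epsilon}(y)\,(U(y)\mathbf{w})(\omega)\,dy.
\]
Strong continuity of $U$ on $L^{p'}(\Omega)$ (valid for $1<p<\infty$, which is the range of interest) gives $\mathbf{w}_{\epsilon}\to\mathbf{w}$ in $L^{p'}(\Omega)^{N}$; differentiation under the integral together with property (v) shows $\mathbf{w}_{\epsilon}\in\mathcal{C}^{\infty}(\Omega)^{N}$; and since $U(y)$ commutes with each $D^{\omega}_{i}$, one has $\textup{div}_{\omega}\mathbf{w}_{\epsilon}=\int_{\mathbb{R}^{N}}\eta_{\epsilon}(y)\,U(y)\,\textup{div}_{\omega}\mathbf{w}\,dy=0$, hence $\mathbf{w}_{\epsilon}\in\mathcal{V}^{\omega}_{\textup{div}}$. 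Passing to the limit in the hypothesis then gives $\int_{\Omega}\mathbf{v}\cdot\mathbf{w}\,d\mu=0$ for every $\mathbf{w}\in\mathcal{K}_{p'}$, so by the bipolar theorem $\mathbf{v}\in G_{p}$ and there exists $u\in W^{1,p}_{\#}(\Omega)$ with $\mathbf{v}=\overline{D}_{\omega}u$.
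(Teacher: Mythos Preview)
Your argument is correct. The curl-free step is immediate from the test fields $\mathbf{g}^{ij,\varphi}$ you construct, and the potential step is the standard Hodge--de Rham type duality: closedness of the range of $\overline{D}_{\omega}$ (it is isometric), identification of its annihilator with the $L^{p'}$ kernel of $\textup{div}_{\omega}$, and density of $\mathcal{V}^{\omega}_{\textup{div}}$ therein via mollification along the flow. The only point worth tightening is the line $\textup{div}_{\omega}\mathbf{w}_{\epsilon}=\int\eta_{\epsilon}(y)\,U(y)\,\textup{div}_{\omega}\mathbf{w}\,dy$: since $\textup{div}_{\omega}\mathbf{w}$ is a priori only an element of $(W^{1,p}_{\#}(\Omega))'$, one should instead test $\textup{div}_{\omega}\mathbf{w}_{\epsilon}$ against an arbitrary $\varphi\in\mathcal{C}^{\infty}(\Omega)$, use the measure invariance and the commutation $U(-y)D^{\omega}_{i}=D^{\omega}_{i}U(-y)$ to transfer the derivative onto $U(-y)\varphi\in\mathcal{C}^{\infty}(\Omega)$, and then invoke $\textup{div}_{\omega}\mathbf{w}=0$ in the weak sense. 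This is a routine rewriting and your conclusion stands.

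As for comparison with the paper: the paper does not give its own proof but simply refers to \cite[Lemma~2.3]{bourgeat} and \cite[Proposition~1]{sango}. Your argument is precisely the kind of closed-range/bipolar reasoning used in those references (Bourgeat--Mikeli\'{c}--Wright work with the explicit Weyl decomposition of $L^{p}(\Omega)^{N}$ into potential and solenoidal parts, of which your duality argument is a clean reformulation), so you have effectively supplied what the paper outsources.
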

\begin{proof}
	See \cite[lemma 2.3]{bourgeat} (or \cite[Proposition 1]{sango}).
\end{proof}

In the sequel, we will put $W^{1,2}(\Omega) = H^{1}(\Omega)$ and $W_{\#}^{1,2}(\Omega)= H^{1}_{\#}(\Omega)$.

\subsection{stochastic two-scale convergence}

Throughout the paper the letter $E$ will denote any ordinary sequence $E=(\epsilon_{n})$ (integers $n\geq 0$) with $0 \leq \epsilon_{n} \leq 1$ and $\epsilon_{n} \rightarrow 0$ as $n\rightarrow \infty$. Such a sequence will be termed a \textit{fundamental sequence}. In this section, we recall the concept of stochastic two-scale convergence which is the generalization of both two-scale convergence in the mean (of Bourgeat and al. \cite{bourgeat}) and two-scale convergence (of Nguetseng \cite{nguet1}). In all that follows, $Q$ is an open bounded subset of $\mathbb{R}^{N}$, $\{ \mathcal{T}(y), \, y \in \mathbb{R}^{N} \}$ is a fixed $N$-dimensional dynamical system on $\Omega$, with respect to a fixed probability measure $\mu$ on $\Omega$ invariant for $\mathcal{T}$. Let $Y = (0,1)^{N}$ and let $F(\mathbb{R}^{N})$ be a given function space. We also assume that $L^{p}(\Omega)$ is separable. We denote by $F_{per}(Y)$ the space of functions in $F_{loc}(\mathbb{R}^{N})$ that are $Y$-periodic, and by $F_{\#}(Y)$ those functions in $F_{per}(Y)$ with mean value zero. As special cases, $\mathcal{D}_{per}(Y)$ denotes the space $\mathcal{C}^{\infty}_{per}(Y)$ while $\mathcal{D}_{\#}(Y)$ stands for the space of those functions in $\mathcal{D}_{per}(Y)$ with mean value zero. $\mathcal{D}'_{per}(Y)$ stands for the topological dual of $\mathcal{D}_{per}(Y)$ which can be identified with the space of periodic distributions in $\mathcal{D}'(\mathbb{R}^{N})$. We say that an element $f \in L^{p}(Q\times\Omega\times Y)$ is \textit{admissible} if the function $f_{\mathcal{T}} : (x, \omega, y) \rightarrow f(x, \mathcal{T}(x)\omega, y)$, $(x, \omega, y) \in Q\times\Omega\times Y$, defines an element of $L^{p}(Q\times\Omega\times Y)$. As example, every element of $\mathcal{C}^{\infty}_{0}(Q)\otimes\mathcal{C}^{\infty}(\Omega)\otimes\mathcal{C}^{\infty}_{per}(Y) \subseteq L^{p}(Q)\otimes L^{p}(\Omega) \otimes L^{p}_{per}(Y)$ is \textit{admissible}.

\begin{defn}
	Let $(u_{\epsilon})_{\epsilon>0}$ be a bounded sequence in $L^{p}(Q\times\Omega)$, $1\leq p < \infty$. The sequence $(u_{\epsilon})_{\epsilon>0}$ is said to weakly stochastically two-scale converge in $L^{p}(Q\times\Omega)$ to some $u_{0} \in L^{p}(Q\times\Omega; L^{p}_{per}(Y))$ if as $\epsilon\to 0$, we have 
	\begin{equation}
	\int_{Q\times\Omega} u_{\epsilon}(x,\omega)f\left(x, \mathcal{T}\left(\frac{x}{\epsilon}\right)\omega, \frac{x}{\epsilon^{2}} \right) dxd\mu \rightarrow \iint_{Q\times\Omega\times Y} u_{0}(x,\omega,y)f(x,\omega,y) dx d\mu dy
	\end{equation}
	for every $f \in \mathcal{C}^{\infty}_{0}(Q)\otimes\mathcal{C}^{\infty}(\Omega)\otimes\mathcal{C}^{\infty}_{per}(Y)$. \\
	We can express this by writing $u_{\epsilon} \rightarrow u_{0}$ stoch. in $L^{p}(Q\times\Omega)$-weak 2s. 
\end{defn}

We recall that $\mathcal{C}^{\infty}_{0}(Q)\otimes\mathcal{C}^{\infty}(\Omega)\otimes\mathcal{C}^{\infty}_{per}(Y)$ is the space of functions of the form, 
\begin{equation}
f(x, \omega, y) = \sum_{finite} \varphi_{i}(x)\psi_{i}(\omega)g_{i}(y), \quad (x,\omega, y) \in Q\times \Omega\times \mathbb{R}^{N} ,
\end{equation}
with $\varphi_{i} \in \mathcal{C}^{\infty}_{0}(Q)$, $\psi_{i} \in \mathcal{C}^{\infty}(\Omega)$ and $g_{i} \in \mathcal{C}^{\infty}_{per}(Y)$. Such functions are dense in $\mathcal{C}^{\infty}_{0}(Q) \otimes L^{p'}(\Omega)\otimes\mathcal{C}^{\infty}_{per}(Y)$ ($p'= \frac{p}{p-1}$) for $1< p < \infty$, since $\mathcal{C}^{\infty}(\Omega)$ is dense in $L^{p'}(\Omega)$ and hence $\mathcal{K}(Q; L^{p'}(\Omega))\otimes\mathcal{C}^{\infty}_{per}(Y)$, where $\mathcal{K}(Q; L^{p'}(\Omega))$ being the space of continuous functions of $Q$ into $L^{p'}(\Omega)$ with compact support containing in $Q$, (see e.g., \cite[Chap III, Proposition 5]{barki2} for denseness result). As $\mathcal{K}(Q; L^{p'}(\Omega))$ is dense in $L^{p'}(Q; L^{p'}(\Omega)) = L^{p'}(Q\times\Omega)$ and $L^{p'}(Q\times\Omega)\otimes\mathcal{C}^{\infty}_{per}(Y)$ is dense in $L^{p'}(Q\times\Omega ; \mathcal{C}_{per}(Y))$, the uniqueness of the stochastic two-scale limit is ensured. 
\par Now, we give some properties about the stochastic two-scale convergence whose proof can be found in \cite{sango} or \cite{sango2}.
\begin{prop}\cite{sango} \\
	Let $(u_{\epsilon})_{\epsilon>0}$ be a sequence in $L^{p}(Q\times\Omega)$. If $u_{\epsilon} \rightarrow u_{0}$ stoch. in $L^{p}(Q\times\Omega)$-weak 2s, then $(u_{\epsilon})_{\epsilon>0}$ stochastically two-scale converges in the mean ( see \cite{bourgeat}) towards $v_{0}(x,\omega) = \int_{Y} u_{0}(x,\omega,y) dy$ and 
	\begin{equation}
	\int_{\Omega} u_{\epsilon}(\cdot, \omega)\psi(\omega)d\mu \rightarrow \iint_{\Omega\times Y} u_{0}(\cdot, \omega, y)d\mu dy \; \textup{in} \, L^{1}(Q)-weak \; \forall \psi \in L^{p'}_{nv}(\Omega).
	\end{equation}
\end{prop}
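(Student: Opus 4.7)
\emph{Plan of proof.} The overall strategy is to test the stochastic two-scale convergence against restricted classes of functions $f(x,\omega,y)$ that do not depend on the fast variable $y$, and then to exploit the boundedness of $(u_\epsilon)$ in $L^p(Q\times\Omega)$ together with density arguments to widen the class of admissible test functions $\psi$.

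\emph{Two-scale convergence in the mean.} Given $\varphi\in\mathcal{C}_0^\infty(Q)$ and $\psi\in\mathcal{C}^\infty(\Omega)$, set $f(x,\omega,y)=\varphi(x)\psi(\omega)\cdot 1$, which lies in $\mathcal{C}_0^\infty(Q)\otimes\mathcal{C}^\infty(\Omega)\otimes\mathcal{C}^\infty_{per}(Y)$ and is therefore admissible as a test function in the definition. Plugging this $f$ into the defining convergence and using Fubini together with $v_0(x,\omega)=\int_Y u_0(x,\omega,y)\,dy$, the right-hand side simplifies to $\int_{Q\times\Omega} v_0(x,\omega)\varphi(x)\psi(\omega)\,dx d\mu$. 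This is exactly the characterization of two-scale convergence in the mean (in the sense of Bourgeat et al.) of $(u_\epsilon)$ towards $v_0$.

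\emph{Weak $L^1(Q)$-convergence for invariant test functions.} Fix $\psi\in L^{p'}_{nv}(\Omega)$ and $\varphi\in\mathcal{C}_0^\infty(Q)$. Since $\mathcal{C}^\infty(\Omega)$ is dense in $L^{p'}(\Omega)$ (recalled in the excerpt), approximate $\psi$ by smooth $\psi_k$. Using the uniform bound on $(u_\epsilon)$ in $L^p(Q\times\Omega)$ and H\"older's inequality, a standard three-term estimate extends the previous convergence to test functions of the form $\varphi(x)\psi(\omega)$ with $\psi\in L^{p'}(\Omega)$. Because $\psi$ is $\mathcal{T}$-invariant, property (vi) in the excerpt yields $\psi(\mathcal{T}(x/\epsilon)\omega)=\psi(\omega)$ for almost every $\omega$, so the integrand on the left-hand side becomes $u_\epsilon(x,\omega)\varphi(x)\psi(\omega)$ without any $\epsilon$-dependence in $\psi$. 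Applying Fubini on both sides gives
\begin{equation*}
\int_Q \varphi(x)\left(\int_\Omega u_\epsilon(x,\omega)\psi(\omega)\,d\mu\right)dx \;\longrightarrow\; \int_Q \varphi(x)\left(\iint_{\Omega\times Y} u_0(x,\omega,y)\psi(\omega)\,d\mu dy\right)dx.
\end{equation*}
Finally, the sequence $x\mapsto \int_\Omega u_\epsilon(x,\omega)\psi(\omega)\,d\mu$ is bounded in $L^p(Q)\subset L^1(Q)$ by H\"older and the $L^p(Q\times\Omega)$-bound on $u_\epsilon$; weak compactness and the density of $\mathcal{C}_0^\infty(Q)$ in the predual then upgrade the convergence against smooth $\varphi$ to convergence in $L^1(Q)$-weak, completing the proof.

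\emph{Main obstacle.} The principal technicality is the density extension in the probabilistic variable: we must pass from the smooth tensor class in the definition to test functions of the form $\varphi(x)\psi(\omega)$ with $\psi\in L^{p'}(\Omega)$, controlling the approximation error uniformly in $\epsilon$ via the a priori $L^p(Q\times\Omega)$-bound. Once this extension is secured, the $\mathcal{T}$-invariance of $\psi$ trivializes the dependence on the rescaled dynamical variable and both conclusions follow with only routine weak-compactness arguments.
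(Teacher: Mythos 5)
The paper does not actually prove this proposition; it is quoted from the reference [Sango--Woukeng] with the remark that the proof ``can be found in \cite{sango} or \cite{sango2}'', so there is no in-paper argument to compare against. Your reconstruction is the standard one and is essentially sound: testing the two-scale convergence against $y$-independent tensor functions $\varphi(x)\psi(\omega)$ yields convergence in the mean toward $v_{0}=\int_{Y}u_{0}\,dy$, and the $\mathcal{T}$-invariance of $\psi$ collapses $\psi(\mathcal{T}(x/\epsilon)\omega)$ to $\psi(\omega)$ once the class of admissible $\psi$ has been enlarged by density. Two points deserve to be made explicit. First, the uniformity in $\epsilon$ of your three-term estimate is not automatic from H\"older alone: it rests on the measure-preserving property of $\mathcal{T}(y)$, which gives $\lVert(\psi-\psi_{k})(\mathcal{T}(x/\epsilon)\cdot)\rVert_{L^{p'}(\Omega)}=\lVert\psi-\psi_{k}\rVert_{L^{p'}(\Omega)}$ for every $x$ and hence an $\epsilon$-independent error $|Q|^{1/p'}\lVert u_{\epsilon}\rVert_{L^{p}}\lVert\varphi\rVert_{\infty}\lVert\psi-\psi_{k}\rVert_{L^{p'}(\Omega)}$; this step (and the density of $\mathcal{C}^{\infty}(\Omega)$ in $L^{p'}(\Omega)$) also tacitly requires $1<p<\infty$. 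Second, the final upgrade to $L^{1}(Q)$-weak convergence should not be phrased as ``density of $\mathcal{C}^{\infty}_{0}(Q)$ in the predual'' of $L^{1}$ (it is not dense in $L^{\infty}(Q)$): the correct route is that $g_{\epsilon}(x)=\int_{\Omega}u_{\epsilon}\psi\,d\mu$ is bounded in $L^{p}(Q)$, so testing against the dense set $\mathcal{C}^{\infty}_{0}(Q)\subset L^{p'}(Q)$ gives weak convergence in $L^{p}(Q)$, which implies weak convergence in $L^{1}(Q)$ because $L^{\infty}(Q)\subset L^{p'}(Q)$ on the bounded domain $Q$. With these two clarifications the argument is complete; note also that the displayed limit in the statement evidently omits a factor $\psi(\omega)$ on the right-hand side, which your proof correctly restores.
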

\begin{prop}\cite{sango}\label{rem3} \\
	Let $f \in \mathcal{K}(Q; \mathcal{C}_{per}(Y; \mathcal{C}^{\infty}(\Omega)))$. Then, as $\epsilon\to 0$, 
	\begin{equation}
	\int_{Q\times\Omega} \left| f\left(x, \mathcal{T}\left(\frac{x}{\epsilon}\right)\omega, \frac{x}{\epsilon^{2}} \right)\right|^{p} dxd\mu \rightarrow \iint_{Q\times\Omega\times Y} \left|f(x,\omega,y)\right|^{p} dx d\mu dy,
	\end{equation}
	for $1 \leq p < \infty$.
\end{prop}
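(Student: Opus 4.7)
The plan is to reduce the stochastic-periodic averaging to a classical periodic averaging by exploiting the $\mu$-invariance of the dynamical system $\mathcal{T}$, then appeal to the standard periodic Riemann--Lebesgue type lemma.

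First I would set $g(x,\omega,y) = |f(x,\omega,y)|^{p}$ and observe that composition with the continuous map $t \mapsto |t|^{p}$ preserves admissibility and the regularity $\mathcal{K}(Q; \mathcal{C}_{per}(Y; \mathcal{C}(\Omega)))$ (the smoothness in $\omega$ is not needed here, only continuity, which is enough). Joint measurability of $(x,\omega) \mapsto g\bigl(x, \mathcal{T}(x/\epsilon)\omega, x/\epsilon^{2}\bigr)$ on $Q\times\Omega$ follows from property (iii) of the dynamical system combined with the continuity of $g$ in $(x,y)$, and the uniform bound $\|g\|_{\infty} \leq \|f\|_{\infty}^{p} < \infty$ coming from the compact support in $x$ and continuity justifies Fubini's theorem. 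Applying Fubini, for each fixed $x \in Q$ the $\mu$-invariance (property (ii)) of the single map $\mathcal{T}(x/\epsilon)$ gives the change of variable $\omega \mapsto \mathcal{T}(x/\epsilon)\omega$, whence
\begin{equation*}
\int_{\Omega} g\!\left(x, \mathcal{T}(x/\epsilon)\omega, x/\epsilon^{2}\right) d\mu(\omega) = \int_{\Omega} g\!\left(x, \omega, x/\epsilon^{2}\right) d\mu(\omega).
\end{equation*}
Thus the $\mathcal{T}(x/\epsilon)$ is eliminated, and the stochastic dependence is frozen.

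Next I would set $G(x,y) = \int_{\Omega} g(x,\omega,y)\,d\mu$ and check that $G$ is continuous on $Q\times\mathbb{R}^{N}$, $Y$-periodic in $y$, with compact support in $x$ contained in the support of $f$. Continuity follows from dominated convergence applied to sequences $(x_{n},y_{n}) \to (x,y)$ using the uniform bound, and periodicity in $y$ is inherited from $f$. The problem reduces to showing
\begin{equation*}
\int_{Q} G(x, x/\epsilon^{2})\,dx \;\longrightarrow\; \int_{Q}\int_{Y} G(x,y)\,dy\,dx \qquad \text{as } \epsilon \to 0.
\end{equation*}
This is the classical periodic averaging lemma for functions in $\mathcal{K}(Q; \mathcal{C}_{per}(Y))$: it is proved first for tensor products $\varphi(x)h(y)$ with $\varphi \in \mathcal{D}(Q)$ and $h \in \mathcal{C}_{per}(Y)$, where it reduces to the classical Riemann--Lebesgue statement for periodic continuous functions, and then extended by the density of such tensors in $\mathcal{K}(Q; \mathcal{C}_{per}(Y))$ for the uniform norm, using the uniform bound $|Q|\,\|G\|_{\infty}$ to control the error.

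Combining the two steps delivers the claimed convergence. The main technical point, which is where one must be careful, is the joint measurability step that underpins the Fubini argument and the change of variable in $\omega$: because the map $\mathcal{T}(x/\epsilon)$ depends on $x$, one cannot invoke measure preservation globally on $Q\times\Omega$, only pointwise in $x$, and this is precisely why property (iii) of the dynamical system is invoked to ensure the composed integrand is $\mathscr{L}\otimes\mathscr{M}$-measurable before slicing.
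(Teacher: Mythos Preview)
Your argument is correct. The paper does not give its own proof of this proposition; it simply records the statement and cites \cite{sango}. So there is no in-paper proof to compare against, and your self-contained argument is a genuine addition.

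Your route---freezing the stochastic variable via the measure-invariance of each single map $\mathcal{T}(x/\epsilon)$ after Fubini, then reducing to the classical periodic Riemann--Lebesgue lemma for the averaged function $G(x,y)=\int_{\Omega}|f(x,\omega,y)|^{p}\,d\mu$---is the natural and standard one. The delicate points you flag (joint $\mathscr{L}\otimes\mathscr{M}$-measurability from property~(iii) before slicing, and the fact that invariance is applied pointwise in $x$ rather than globally on $Q\times\Omega$) are exactly right, and the uniform bound from compact support in $x$ and continuity makes all the limit exchanges legitimate. One cosmetic remark: you only use continuity of $f$ in $\omega$, not the $\mathcal{C}^{\infty}(\Omega)$ regularity, so the hypothesis is stronger than needed for this particular conclusion; this is consistent with how the proposition is actually used later in the paper (only as a norm-convergence statement inside the proof of Theorem~\ref{rem2}).
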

\begin{prop}\cite{sango}\label{rem1} \\
	Any bounded sequence $(u_{\epsilon})_{\epsilon\in E}$  in $L^{p}(Q\times\Omega)$ admits a subsequence which is weakly stochastically two-scale convergent in $L^{p}(Q\times\Omega)$.
\end{prop}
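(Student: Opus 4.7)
My plan is to run the classical two-scale compactness argument adapted to the stochastic setting, reducing it to Banach--Alaoglu on the dual of a suitable separable test-function space.

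First I would define, for each fixed $\varepsilon > 0$, the linear functional $L_\varepsilon$ on the tensor-product space $\mathcal{F}=\mathcal{C}^{\infty}_{0}(Q)\otimes\mathcal{C}^{\infty}(\Omega)\otimes\mathcal{C}^{\infty}_{per}(Y)$ by
\begin{equation*}
L_{\varepsilon}(f) \;=\; \int_{Q\times\Omega} u_{\varepsilon}(x,\omega)\, f\!\left(x,\mathcal{T}\!\left(\tfrac{x}{\varepsilon}\right)\omega,\tfrac{x}{\varepsilon^{2}}\right) dx\,d\mu.
\end{equation*}
By H\"older's inequality, $|L_{\varepsilon}(f)|\le \|u_{\varepsilon}\|_{L^{p}(Q\times\Omega)}\,\|f_{\mathcal{T}^{\varepsilon}}\|_{L^{p'}(Q\times\Omega)}$, where $f_{\mathcal{T}^{\varepsilon}}$ denotes the function $(x,\omega)\mapsto f(x,\mathcal{T}(x/\varepsilon)\omega,x/\varepsilon^{2})$. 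Applying Proposition \ref{rem3} to $|f|^{p'}$ gives $\|f_{\mathcal{T}^{\varepsilon}}\|_{L^{p'}(Q\times\Omega)}\to \|f\|_{L^{p'}(Q\times\Omega\times Y)}$, so this quantity is uniformly bounded in $\varepsilon$. Combined with the assumed boundedness of $(u_{\varepsilon})$ in $L^{p}(Q\times\Omega)$, this yields a constant $C$, independent of $\varepsilon$, such that
\begin{equation*}
|L_{\varepsilon}(f)| \;\le\; C\,\|f\|_{L^{p'}(Q\times\Omega\times Y)}\qquad \text{for all }f\in\mathcal{F}.
\end{equation*}

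Next, by the density discussion immediately following the definition of stochastic two-scale convergence, $\mathcal{F}$ is dense in the separable Banach space $X = L^{p'}(Q\times\Omega;\mathcal{C}_{per}(Y))$, and the above estimate shows $L_{\varepsilon}$ extends continuously to the subspace $X \subseteq L^{p'}(Q\times\Omega\times Y)$. Since the bound is with respect to the $L^{p'}(Q\times\Omega\times Y)$-norm, the $L_{\varepsilon}$ form a bounded sequence in the dual of the separable reflexive Banach space $L^{p'}(Q\times\Omega\times Y)$ (for $1<p<\infty$; for $p=1$ one replaces this by the dual of $X$, which is still suitable). By the Banach--Alaoglu theorem applied along the fundamental sequence $E$, and a standard diagonal extraction over a countable dense subset of $\mathcal{F}$, I extract a subsequence (not relabelled) and a bounded linear functional $L$ such that $L_{\varepsilon}(f)\to L(f)$ for every $f\in\mathcal{F}$.

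Finally, I would identify $L$ with a function $u_{0}$. Since $L$ extends to a continuous linear form on $L^{p'}(Q\times\Omega\times Y)$ for $1<p<\infty$, the Riesz representation theorem produces a unique $u_{0}\in L^{p}(Q\times\Omega\times Y)\cong L^{p}(Q\times\Omega;L^{p}_{per}(Y))$ representing $L$, and passing to the limit in the definition of $L_{\varepsilon}$ yields exactly the stochastic two-scale convergence $u_{\varepsilon}\to u_{0}$ claimed in Definition 2.1. The main obstacle is the limit-identification step: one must verify that the extracted functional really is representable by a function on $Q\times\Omega\times Y$ rather than a more general Radon-measure-valued object, and that the periodicity in the $y$-variable is inherited in the limit. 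This is where the separability of $L^{p}(\Omega)$ assumed in the section, together with the $Y$-periodicity of every test function $f\in\mathcal{F}$ in its third argument, are used; the latter forces $u_{0}(x,\omega,\cdot)$ to be $Y$-periodic almost everywhere, placing it in $L^{p}(Q\times\Omega;L^{p}_{per}(Y))$ as required.
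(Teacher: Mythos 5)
Your proposal is correct and is essentially the argument the paper itself relies on: the paper defers the proof of this proposition to \cite{sango}, but its own proof of the time-dependent generalization in Theorem \ref{rem2} is exactly your scheme --- a H\"older bound combined with Proposition \ref{rem3} to obtain $\limsup_{\epsilon}|L_{\epsilon}(f)|\leq C\,\|f\|_{L^{p'}(Q\times\Omega\times Y)}$, followed by extraction of a limit functional in the dual of the reflexive space $L^{p'}(Q\times\Omega\times Y)$ and identification of that functional with an element of $L^{p}(Q\times\Omega;L^{p}_{per}(Y))$. The one point to state carefully is that the estimate on $|L_{\epsilon}(f)|$ is a $\limsup$ bound rather than a bound uniform in $\epsilon$, which is precisely why the extraction lemma (Proposition \ref{rem4}) or, equivalently, your diagonal argument over a countable dense subset is needed in place of a direct appeal to Banach--Alaoglu; with that reading (and restricting to $1<p<\infty$, as the $p=1$ case genuinely fails), your argument matches the intended proof.
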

\begin{prop}\cite{sango}\label{rem12} \\
	Let $1 < p < \infty$. Let $X$ be a norm closed convex subset of $W^{1,p}(Q)$, $Q$ being an open bounded subset of $\mathbb{R}^{N}$. Assume that $(u_{\epsilon})_{\epsilon\in E}$ is a sequence in $L^{p}(Q\times\Omega)$ such that : 
	\begin{itemize}
		\item[i)] $u_{\epsilon}(\cdot, \omega) \in X$ for all $\epsilon \in E$ and for $\mu$-a.e. $\omega \in \Omega$ ;
		\item[ii)] $(u_{\epsilon})_{\epsilon\in E}$ is bounded in $L^{p}(\Omega; W^{1,p}(Q))$.
	\end{itemize}
	Then there exist $u_{0} \in W^{1,p}(Q; L_{nv}^{p}(\Omega))$, $u_{1} \in L^{p}(Q; W^{1,p}_{\#}(\Omega))$, $u_{2} \in L^{p}(Q\times \Omega ; W^{1,p}_{\#}(Y))$ and a subsequence $E'$ from $E$ such that 
	\begin{itemize}
		\item[iii)] $u_{0}(\cdot, \omega) \in X$ for $\mu$-a.e. $\omega \in \Omega$ and, as $E' \ni \epsilon \rightarrow 0$, 
		\item[iv)] $u_{\epsilon} \rightarrow u_{0}$ stoch. in $L^{p}(Q\times \Omega)$-weak 2s;
		\item[v)] $Du_{\epsilon} \rightarrow Du_{0} + \overline{D}_{\omega}u_{1} + D_{y}u_{2}$ stoch. in $L^{p}(Q\times \Omega)^{N}$-weak 2s.  
	\end{itemize}
\end{prop}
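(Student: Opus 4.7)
The strategy combines Proposition \ref{rem1} with integration by parts against rescaled test functions, followed by an application of the fundamental Lemma to unscramble the correctors. Since $(u_{\epsilon})$ and each $(\partial_{x_{i}}u_{\epsilon})$, $1\leq i\leq N$, is bounded in $L^{p}(Q\times\Omega)$ by assumption (ii), a diagonal extraction based on Proposition \ref{rem1} furnishes a subsequence $E'\subseteq E$ together with limits $u_{*}\in L^{p}(Q\times\Omega;L^{p}_{per}(Y))$ and $\zeta=(\zeta_{i})\in L^{p}(Q\times\Omega;L^{p}_{per}(Y))^{N}$ such that $u_{\epsilon}\to u_{*}$ and $Du_{\epsilon}\to\zeta$ stoch. in $L^{p}$-weak 2s along $E'$.

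\textbf{Identification of $u_{*}$.} For a smooth test field $\phi\in\mathcal{C}_{0}^{\infty}(Q)\otimes\mathcal{C}^{\infty}(\Omega)\otimes\mathcal{C}^{\infty}_{per}(Y)^{N}$, set $\phi^{\epsilon}(x,\omega)=\phi(x,\mathcal{T}(x/\epsilon)\omega,x/\epsilon^{2})$; property \textit{(v)} gives
\[
\operatorname{div}_{x}\phi^{\epsilon}=(\operatorname{div}_{x}\phi)^{\epsilon}+\epsilon^{-1}(\operatorname{div}_{\omega}\phi)^{\epsilon}+\epsilon^{-2}(\operatorname{div}_{y}\phi)^{\epsilon}.
\]
Integrating by parts $\int u_{\epsilon}\operatorname{div}_{x}\phi^{\epsilon}=-\int Du_{\epsilon}\cdot\phi^{\epsilon}$, multiplying the resulting identity by $\epsilon^{2}$ and passing to the limit along $E'$ forces $\iiint u_{*}\operatorname{div}_{y}\phi=0$ for every $\phi$, hence $u_{*}$ is $y$-independent. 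Repeating the step with prefactor $\epsilon$, and now using the $y$-independence just obtained, yields $\iiint u_{*}\operatorname{div}_{\omega}\phi=0$, so $\overline{D}_{\omega}u_{*}=0$; by property \textit{(vii)} this means $u_{*}=u_{0}(x,\omega)$ with $u_{0}(x,\cdot)\in L^{p}_{nv}(\Omega)$. Finally, choosing $\phi$ solenoidal in both $y$ and $\omega$ and passing to the limit without rescaling gives $\iiint\zeta\cdot\phi=-\iiint u_{0}\operatorname{div}_{x}\phi$, which promotes $u_{0}$ to $W^{1,p}(Q;L^{p}_{nv}(\Omega))$ and matches its $x$-gradient with the solenoidal-orthogonal projection of $\zeta$.

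\textbf{Correctors and convex closure.} The residual $\eta:=\zeta-Du_{0}$ then annihilates every smooth $\phi$ that is compactly supported in $x$ and jointly solenoidal in $(y,\omega)$. A two-step application of the fundamental Lemma, first extracting for a.e. $(x,\omega)$ a periodic potential $u_{2}(x,\omega,\cdot)\in W^{1,p}_{\#}(Y)^{N}$ via its periodic analogue and then, after subtracting $D_{y}u_{2}$, extracting from the $y$-mean residual a stochastic potential $u_{1}(x,\cdot)\in W^{1,p}_{\#}(\Omega)$ via the lemma itself, delivers the decomposition $\eta=\overline{D}_{\omega}u_{1}+D_{y}u_{2}$. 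For assertion \textit{(iii)}, testing the stoch. 2s convergence against $\mathcal{T}$-invariant $\psi\in L^{p'}_{nv}(\Omega)$ combined with the bound in $L^{p}(\Omega;W^{1,p}(Q))$ from (ii) upgrades the convergence to weak $L^{p}(\Omega;W^{1,p}(Q))$ convergence of $u_{\epsilon}$ to $u_{0}$; the set $\{v\in L^{p}(\Omega;W^{1,p}(Q)):v(\omega)\in X \ \mu\text{-a.e.}\}$ is convex and norm-closed, hence weakly closed, so the hypothesis $u_{\epsilon}(\cdot,\omega)\in X$ for $\mu$-a.e. $\omega$ transfers to $u_{0}(\cdot,\omega)\in X$ for $\mu$-a.e. $\omega$.

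\textbf{Main difficulty.} The crucial hurdle is the corrector decomposition in Step 3: one must not only justify the orthogonal splitting $\eta=\overline{D}_{\omega}u_{1}+D_{y}u_{2}$ pointwise in the parameter variables, but also guarantee that the resulting potentials are genuinely Bochner-measurable in $(x,\omega)$ and $x$ respectively, and that they enjoy integrability at the required exponent $p$. This requires either a careful duality-based construction or a measurable selection argument coupled with the Weyl-type decomposition in $W^{1,p}_{\#}(\Omega)$ and $W^{1,p}_{\#}(Y)$; keeping the stochastic gradient and divergence in perfect duality throughout this construction is what makes the argument substantially more intricate than its purely periodic counterpart.
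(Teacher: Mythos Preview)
The paper does not give its own proof of this proposition; it is quoted verbatim from \cite{sango} (see the sentence preceding Propositions 2.2--2.5: ``we give some properties about the stochastic two-scale convergence whose proof can be found in \cite{sango} or \cite{sango2}''), and no argument is reproduced here. Your sketch is the standard route used in \cite{sango}: compactness via Proposition~\ref{rem1}, identification of the macroscopic limit by testing against $\epsilon^{2}\phi^{\epsilon}$ and $\epsilon\phi^{\epsilon}$, the orthogonality argument against solenoidal fields followed by the potential lemma to produce the correctors, and Mazur's theorem for the convex-closure clause. So there is nothing to compare against in the present paper, and your approach is consistent with the cited source.

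Two minor points worth cleaning up. First, in Step~3 you write $u_{2}(x,\omega,\cdot)\in W^{1,p}_{\#}(Y)^{N}$; the potential $u_{2}$ is scalar-valued (it is $D_{y}u_{2}$ that lies in $L^{p}$ with $N$ components), so drop the exponent $N$. Second, your ``Main difficulty'' paragraph correctly flags the measurability and integrability of the correctors, but in the actual proof in \cite{sango} this is handled not by a measurable-selection argument but by a direct duality construction: one works globally in $L^{p}(Q\times\Omega;L^{p}_{per}(Y))^{N}$, identifies the closure of the range of $(\overline{D}_{\omega},D_{y})$ there, and invokes reflexivity together with the density of tensor products to obtain the potentials with the required Bochner integrability built in. If you intend to expand this into a full proof, that global route is cleaner than pointwise extraction plus selection.
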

From this proposition \ref{rem12}, we deduce the other following proposition.
\begin{prop}\label{rem5} 
	Let $1 < p < \infty$, $Q$ being an open bounded subset of $\mathbb{R}^{N}$. Assume that $(v_{\epsilon})_{\epsilon\in E}$ is a sequence in $L^{p}(Q\times\Omega)^{N}$ such that : 
	\begin{itemize}
		\item[i)] $(v_{\epsilon})$ is bounded in $L^{p}(Q\times\Omega)^{N}$;
		\item[ii)] $(\textup{div}v_{\epsilon})$ is bounded in $L^{p}(Q\times\Omega)$.
	\end{itemize}
	Then there exist $v_{0} \in W^{1,p}(Q; L_{nv}^{p}(\Omega))^{N}$, $v_{1} \in L^{p}(Q; W^{1,p}_{\#}(\Omega))^{N}$, $v_{2} \in L^{p}(Q\times \Omega ; W^{1,p}_{\#}(Y))^{N}$ and a subsequence $E'$ from $E$ such that, as $E' \ni \epsilon \rightarrow 0$,  
	\begin{itemize}
		\item[iii)] $v_{\epsilon} \rightarrow v_{0}$ stoch. in $L^{p}(Q\times \Omega)^{N}$-weak 2s; 
		\item[iv)] $\textup{div}v_{\epsilon} \rightarrow \textup{div}v_{0} + \textup{div}_{\omega}v_{1} + \textup{div}_{y}v_{2}$ stoch. in $L^{p}(Q\times \Omega)$-weak 2s. 
	\end{itemize}
\end{prop}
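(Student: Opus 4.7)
The plan is to mirror the proof of Proposition~\ref{rem12}, replacing the gradient bound by the divergence bound and exploiting the duality between $D$ and $\textup{div}$ through integration by parts. First, applying Proposition~\ref{rem1} componentwise to $(v_\epsilon)$ and separately to the scalar sequence $(\textup{div}\,v_\epsilon)$, and then extracting a common subsequence $E'$ by diagonal selection, one obtains stochastic two-scale limits $V\in L^{p}(Q\times\Omega;L^{p}_{per}(Y))^{N}$ and $D\in L^{p}(Q\times\Omega;L^{p}_{per}(Y))$ with $v_\epsilon\to V$ and $\textup{div}\,v_\epsilon\to D$ along $E'$.

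Next, I would constrain the structure of $V$ using the identity
\[\int_{Q\times\Omega}(\textup{div}\,v_\epsilon)\,\varphi^\epsilon\,dx\,d\mu \;=\; -\int_{Q\times\Omega} v_\epsilon\cdot\nabla\varphi^\epsilon\,dx\,d\mu,\]
applied to test functions $\varphi^{\epsilon}(x,\omega)=\epsilon^{s}\phi(x)\psi(\mathcal{T}(x/\epsilon)\omega)g(x/\epsilon^{2})$ for $s\in\{1,2\}$. For $s=2$ the only surviving scale-separated term in $\nabla\varphi^\epsilon$ is the $\epsilon^{-2}\nabla_{y}g$ piece, which pairs with $v_\epsilon$ to yield $\iiint V\cdot\phi\psi\,\nabla_{y}g\,dx\,d\mu\,dy=0$, so $\textup{div}_{y}V=0$ distributionally in $y$. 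Taking $s=1$ with $g\equiv 1$ yields $\iiint V\cdot\phi\,\overline{D}_{\omega}\psi\,dx\,d\mu\,dy=0$, so the $Y$-average of $V$ is stochastically divergence-free. Combining these orthogonality relations with the Bourgeat--Sango lemma of Section~\ref{sect2} (applied fibrewise in $\omega$ and in $y$) recovers the structure $V=v_{0}(x,\omega)$ for some $v_{0}\in L^{p}(Q;L^{p}_{nv}(\Omega))^{N}$; the divergence bound on $(v_{\epsilon})$ then upgrades this to $v_{0}\in W^{1,p}(Q;L^{p}_{nv}(\Omega))^{N}$.

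To extract the full decomposition of $D$, I would employ the three-scale test ansatz
\[\Phi^{\epsilon}(x,\omega)=\phi(x)+\epsilon\,\psi_{1}(x,\mathcal{T}(x/\epsilon)\omega)+\epsilon^{2}\,g_{2}(x,\mathcal{T}(x/\epsilon)\omega,x/\epsilon^{2}),\]
with $\phi\in\mathcal{C}^{\infty}_{0}(Q)$, $\psi_{1}\in\mathcal{C}^{\infty}_{0}(Q)\otimes\mathcal{C}^{\infty}(\Omega)$ and $g_{2}\in\mathcal{C}^{\infty}_{0}(Q)\otimes\mathcal{C}^{\infty}(\Omega)\otimes\mathcal{C}^{\infty}_{per}(Y)$. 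The $\epsilon^{-1}$ and $\epsilon^{-2}$ singular contributions in $\nabla\Phi^{\epsilon}$ cancel by construction, so $\nabla\Phi^{\epsilon}$ remains bounded in $L^{p'}$ and stochastically two-scale converges to $\nabla\phi+\overline{D}_{\omega}\psi_{1}+\nabla_{y}g_{2}$. Passing to the limit in the integration-by-parts identity yields
\[\iiint_{Q\times\Omega\times Y} D\,\phi\,dx\,d\mu\,dy=-\iiint_{Q\times\Omega\times Y} v_{0}\cdot(\nabla\phi+\overline{D}_{\omega}\psi_{1}+\nabla_{y}g_{2})\,dx\,d\mu\,dy.\]
Specialising $\psi_{1}=g_{2}=0$ extracts the $\textup{div}\,v_{0}$ contribution; varying $\psi_{1}$ with $\phi=g_{2}=0$ produces, for almost every $x$, an $L^{p}(\Omega)^{N}$-field orthogonal to every $\mathbf{f}\in\mathcal{V}^{\omega}_{\textup{div}}$, so the Bourgeat--Sango lemma supplies $v_{1}\in L^{p}(Q;W^{1,p}_{\#}(\Omega))^{N}$; its classical periodic counterpart in $y$ produces $v_{2}\in L^{p}(Q\times\Omega;W^{1,p}_{\#}(Y))^{N}$. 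These assemble into $D=\textup{div}\,v_{0}+\textup{div}_{\omega}v_{1}+\textup{div}_{y}v_{2}$.

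The main obstacle is the structural step for $V$: in the absence of a full gradient bound on $(v_\epsilon)$, the reduction of $V$ to a $y$-independent, $\mathcal{T}$-invariant profile $v_{0}$ demands a careful decoupling of the $\omega$- and $y$-scales via the solenoidal-orthogonality lemma, applied simultaneously in both fibres; a density/admissibility argument grounded in Proposition~\ref{rem3} is needed to ensure that the tensor test class is rich enough. The cancellation of the $\epsilon^{-1}$ and $\epsilon^{-2}$ singular terms in $\nabla\Phi^{\epsilon}$ is straightforward but requires careful bookkeeping.
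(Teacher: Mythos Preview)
Your structural step for $V$ is where the argument breaks down, and the Bourgeat--Sango lemma cannot rescue it. The two orthogonality relations you correctly derive, namely $\textup{div}_{y}V=0$ and $\textup{div}_{\omega}\bigl(\int_{Y}V\,dy\bigr)=0$, do \emph{not} force $V$ to be $y$-independent or $\mathcal{T}$-invariant: a solenoidal field in $y$ can oscillate freely in $y$, and a stochastically divergence-free field in $\omega$ need not lie in $L^{p}_{nv}(\Omega)$. Lemma~2.1 goes in the opposite direction (it characterises fields orthogonal to $\mathcal{V}^{\omega}_{\textup{div}}$ as stochastic gradients), so invoking it ``fibrewise'' here is a category error. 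Concretely, take $v_{\epsilon}(x,\omega)=w(\mathcal{T}(x/\epsilon)\omega)$ with $w\in\mathcal{C}^{\infty}(\Omega)^{N}$, $\textup{div}_{\omega}w=0$, $w\notin L^{p}_{nv}(\Omega)^{N}$: then $(v_{\epsilon})$ is bounded in $L^{p}(Q\times\Omega)^{N}$, $\textup{div}\,v_{\epsilon}\equiv 0$, yet the stochastic two-scale limit is $w(\omega)$, which is neither $y$-independent-and-invariant in the sense your argument requires.

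The paper itself does not give an independent proof: it simply says the proposition is ``deduced'' from Proposition~\ref{rem12}. That deduction is immediate once one has a full gradient bound on $(v_{\epsilon})$ --- one applies Proposition~\ref{rem12} componentwise and then takes the trace of conclusion~(v) --- and in the paper's only application (Section~\ref{sect4}) that bound is indeed available via Korn's inequality from the strain estimates (\ref{rem6})--(\ref{rem7}). In other words, the operative hypothesis in practice is boundedness in $L^{p}(\Omega;W^{1,p}(Q))^{N}$, not merely the $L^{p}$ bound on the divergence stated in (ii). Your attempt to work from the weaker divergence hypothesis alone cannot succeed for the reason above; if you assume the stronger gradient bound, the result is a direct corollary of Proposition~\ref{rem12} and your three-scale test-function machinery becomes unnecessary.
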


In order to deal with the homogenized property for parabolic equations, we prove the following compactness theorem with one parameter, which generalizes the compactness theorem for elliptic problems described in proposition \ref{rem1}.
\begin{thm}\label{rem2}
	Let $0 < T \leq \infty$. Suppose that $(u_{\epsilon})_{\epsilon\in E}$ is a bounded sequence of functions in $L^{2}([0,T); L^{p}(Q\times\Omega))$. Then there are a subsequence $E'$ from $E$ and $u_{0}(t,x,\omega,y) \in L^{2}([0,T); L^{p}(Q\times\Omega; L^{p}_{per}(Y)))$ such that for any $f(t,x,\omega,y) \in L^{2}([0,T); \mathcal{C}^{\infty}_{0}(Q)\otimes\mathcal{C}^{\infty}(\Omega)\otimes\mathcal{C}^{\infty}_{per}(Y))$, as $\epsilon\to 0$ we have 
	\begin{equation}
	\begin{array}{ll}
	\int_{0}^{T} \int_{Q\times\Omega} u_{\epsilon}(t,x,\omega)f\left(t,x, \mathcal{T}\left(\frac{x}{\epsilon}\right)\omega, \frac{x}{\epsilon^{2}} \right) dxd\mu dt & \\
	&  \\
	\longrightarrow  \int_{0}^{T}\iint_{Q\times\Omega\times Y} u_{0}(t,x,\omega,y)f(t,x,\omega,y) dx d\mu dy dt &
	\end{array}
	\end{equation}	
\end{thm}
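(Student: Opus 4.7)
The plan is to adapt the standard proof of the stochastic two-scale compactness theorem (Proposition \ref{rem1}) by treating the time variable as a parameter and arguing via Riesz representation in a reflexive Bochner space. Throughout, set $p'=p/(p-1)$ and assume $1<p<\infty$ (the case $p=1$ requires minor modifications using weak-$*$ compactness in $L^\infty$).

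First, for each $\epsilon\in E$ I would define the linear functional $L_\epsilon$ on the test space $\mathcal{A}:=L^2([0,T);\mathcal{C}_0^\infty(Q)\otimes\mathcal{C}^\infty(\Omega)\otimes\mathcal{C}_{per}^\infty(Y))$ by
\[
L_\epsilon(f)=\int_0^T\int_{Q\times\Omega}u_\epsilon(t,x,\omega)\,f\!\left(t,x,\mathcal{T}\!\left(\tfrac{x}{\epsilon}\right)\omega,\tfrac{x}{\epsilon^2}\right)dx\,d\mu\,dt.
\]
By Cauchy--Schwarz in $t$ and Hölder in $(x,\omega)$ with exponents $(p,p')$, one has
\[
|L_\epsilon(f)|\le\|u_\epsilon\|_{L^2([0,T);L^p(Q\times\Omega))}\left(\int_0^T\|f_\epsilon(t,\cdot)\|_{L^{p'}(Q\times\Omega)}^{2}\,dt\right)^{1/2},
\]
where $f_\epsilon(t,x,\omega)=f(t,x,\mathcal{T}(x/\epsilon)\omega,x/\epsilon^2)$.

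Next, I would apply Proposition \ref{rem3} pointwise in $t$ to obtain
\[
\|f_\epsilon(t,\cdot)\|_{L^{p'}(Q\times\Omega)}^{p'}\longrightarrow\iint_{Q\times\Omega\times Y}|f(t,x,\omega,y)|^{p'}dx\,d\mu\,dy
\]
as $\epsilon\to 0$. A standard domination argument (the integrand is bounded by $\|f(t,\cdot)\|_{L^\infty}^{p'}|Q|$, which is $L^1$ in $t$) combined with the Lebesgue dominated convergence theorem then yields
\[
\limsup_{\epsilon\to 0}|L_\epsilon(f)|\le C\,\|f\|_{L^2([0,T);L^{p'}(Q\times\Omega\times Y))},
\]
with $C:=\sup_\epsilon\|u_\epsilon\|_{L^2([0,T);L^p(Q\times\Omega))}<\infty$ by hypothesis.

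Now by the density argument already noted in the paper (density of the algebraic tensor product in $L^{p'}(Q\times\Omega\times Y)$, combined with density of simple tensors in $t$), $\mathcal{A}$ is dense in the reflexive separable Banach space $\mathcal{B}:=L^2([0,T);L^{p'}(Q\times\Omega\times Y))$. Hence, after passing to a subsequence $E'\subset E$, the functionals $L_\epsilon$ converge on a countable dense subset of $\mathcal{B}$ (using a diagonal extraction over a countable dense family in $\mathcal{A}$), and by the uniform bound above the limit extends to a bounded linear functional $L$ on $\mathcal{B}$. By the Riesz representation theorem for the dual of $L^2([0,T);L^{p'}(Q\times\Omega\times Y))$, there exists a unique
\[
u_0\in L^2([0,T);L^p(Q\times\Omega\times Y))
\]
such that $L(f)=\int_0^T\iint_{Q\times\Omega\times Y}u_0\,f\,dx\,d\mu\,dy\,dt$, and the conclusion of the theorem follows on test functions in $\mathcal{A}$.

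The main technical obstacle I anticipate is the passage from the pointwise-in-$t$ convergence furnished by Proposition \ref{rem3} to the integrated convergence needed to bound $L_\epsilon$ uniformly; this is handled cleanly by dominated convergence once one observes that $\|f_\epsilon(t,\cdot)\|_{L^{p'}}$ is dominated by the $t$-dependent $L^\infty$-norm of $f$, which is in $L^2(0,T)$. A secondary point is verifying that $u_0$ can be regarded as an element of $L^2([0,T);L^p(Q\times\Omega;L^p_{per}(Y)))$ rather than just $L^2([0,T);L^p(Q\times\Omega\times Y))$; this is a Fubini-type identification together with the fact that the test functions are $Y$-periodic in $y$, so $u_0$ inherits $Y$-periodicity in the weak sense and may be represented as a $Y$-periodic extension, exactly as in the proof of Proposition \ref{rem1}.
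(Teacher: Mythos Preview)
Your proposal is correct and follows essentially the same route as the paper: define the functionals $L_\epsilon$, bound them via H\"older and Proposition~\ref{rem3}, then extract a convergent subsequence and identify the limit by duality. The only cosmetic difference is that the paper packages the extraction step into a single citation of Proposition~\ref{rem4} (an abstract lemma stating precisely that a sequence of functionals satisfying a uniform $\limsup$ bound on a subspace of a reflexive space admits a subsequence converging to an element of the dual), whereas you carry out that same argument explicitly via diagonal extraction and Riesz representation; your added remark on dominated convergence in $t$ fills in a detail the paper leaves implicit.
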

The proof of the above theorem relies on the following proposition whose proof can be found in \cite{gabri}. 
\begin{prop}\cite[Proposition 3.2]{gabri}\label{rem4} \\
	Let $F$ be a subspace (not necessarily closed) of a reflexive Banach space $G$ and let $f_{n} : F \rightarrow \mathbb{C}$ be a sequence of linear functionals (not necessarily continuous).  Assume there exists a constant $C > 0$ such that 
	\begin{equation*}
	\limsup_{n} |f_{n}(x)| \leq C \, \|x\| \quad \textup{for \,\, all} \,\, x \in F,
	\end{equation*}
	where $\|\cdot\|$ denotes the norm in $G$. Then there exists a subsequence $(f_{n_{k}})_{k}$ of $(f_{n})$ and a functional $f \in G'$ such that 
	\begin{equation*}
	\lim_{k} f_{n_{k}}(x) = f(x) \quad \textup{for \,\, all} \,\, x \in F,
	\end{equation*} 
\end{prop}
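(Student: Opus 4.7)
The plan is to adapt the proof of Proposition \ref{rem1} by absorbing the time variable into the underlying Banach space structure, and then to invoke the abstract extraction result of Proposition \ref{rem4}. Concretely, assume $1<p<\infty$ (the only relevant case, since Proposition \ref{rem4} requires reflexivity) and set
\begin{equation*}
G = L^{2}\bigl([0,T);\, L^{p'}(Q\times\Omega;\, L^{p'}_{per}(Y))\bigr),\qquad
F = L^{2}\bigl([0,T);\, \mathcal{C}^{\infty}_{0}(Q)\otimes\mathcal{C}^{\infty}(\Omega)\otimes\mathcal{C}^{\infty}_{per}(Y)\bigr)\subset G.
\end{equation*}
For each $\epsilon_n\in E$, I would define a linear functional $L_{\epsilon_n}:F\to\mathbb{R}$ by
\begin{equation*}
L_{\epsilon_n}(f) = \int_{0}^{T}\!\!\int_{Q\times\Omega} u_{\epsilon_n}(t,x,\omega)\, f\!\left(t,x,\mathcal{T}\!\left(\tfrac{x}{\epsilon_n}\right)\!\omega,\tfrac{x}{\epsilon_n^{2}}\right) dx\,d\mu\,dt.
\end{equation*}

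The first key step is to verify the hypothesis of Proposition \ref{rem4}. Cauchy--Schwarz in $t$ followed by Hölder in $(x,\omega)$ gives
\begin{equation*}
|L_{\epsilon_n}(f)| \leq \|u_{\epsilon_n}\|_{L^{2}([0,T);\,L^{p}(Q\times\Omega))}\left(\int_{0}^{T}\!\bigl\|f(t,\cdot,\mathcal{T}(\cdot/\epsilon_n)\cdot,\cdot/\epsilon_n^{2})\bigr\|_{L^{p'}(Q\times\Omega)}^{2}\,dt\right)^{1/2}.
\end{equation*}
The first factor is bounded by the constant $C=\sup_n\|u_{\epsilon_n}\|_{L^{2}([0,T);L^{p}(Q\times\Omega))}$. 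For the second factor, Proposition \ref{rem3} applied pointwise in $t$ yields $\|f(t,\cdot,\mathcal{T}(\cdot/\epsilon_n)\cdot,\cdot/\epsilon_n^{2})\|_{L^{p'}(Q\times\Omega)}\to \|f(t,\cdot,\cdot,\cdot)\|_{L^{p'}(Q\times\Omega\times Y)}$, while for $f=\sum_i\varphi_i\otimes\psi_i\otimes g_i\in F$ the elementary pointwise bound $|f(t,x,\mathcal{T}(x/\epsilon_n)\omega,x/\epsilon_n^{2})|\leq\sum_i|\varphi_i(x)|\|\psi_i\|_{\infty}\|g_i\|_{\infty}$ provides a uniform $L^{p'}(Q\times\Omega)$ majorant that is square-integrable in $t$. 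Dominated convergence in $t$ then gives $\limsup_n|L_{\epsilon_n}(f)|\leq C\|f\|_{G}$ for every $f\in F$.

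Proposition \ref{rem4} now produces a subsequence $E'\subset E$ and a functional $L\in G'$ with $L_{\epsilon_n}(f)\to L(f)$ for all $f\in F$. Since $L^{p'}(Q\times\Omega; L^{p'}_{per}(Y))$ is reflexive and separable, the Bochner--Dunford--Pettis duality identifies
\begin{equation*}
G' = L^{2}\bigl([0,T);\,L^{p}(Q\times\Omega;\,L^{p}_{per}(Y))\bigr),
\end{equation*}
so there exists $u_{0}\in L^{2}([0,T);\,L^{p}(Q\times\Omega;L^{p}_{per}(Y)))$ with
\begin{equation*}
L(f) = \int_{0}^{T}\!\!\iint_{Q\times\Omega\times Y} u_{0}(t,x,\omega,y)\,f(t,x,\omega,y)\,dx\,d\mu\,dy\,dt,
\end{equation*}
which is exactly the conclusion of the theorem on the test class $F$.

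The main obstacle I anticipate is the uniform-domination step required to pass Proposition \ref{rem3} through the $L^{2}([0,T))$ integration: one must verify that for test functions of the tensor-product form in $F$, the family $\{\|f(t,\cdot,\mathcal{T}(\cdot/\epsilon)\cdot,\cdot/\epsilon^{2})\|_{L^{p'}(Q\times\Omega)}\}_{\epsilon}$ is dominated in $t$ by a fixed $L^{2}$ function. This is where the choice of test functions $\varphi_i\in\mathcal{C}^{\infty}_{0}(Q)$, $\psi_i\in\mathcal{C}^{\infty}(\Omega)$, $g_i\in\mathcal{C}^{\infty}_{per}(Y)$ is essential, since it gives a uniform $L^{\infty}(Q\times\Omega\times Y)$ bound pushed through the fixed compact support in $x$. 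Everything else (density of $F$ in $G$, measurability of the maps $t\mapsto L_{\epsilon_n}(f(t,\cdot))$, and the dual identification of $G$) follows from standard Bochner-space arguments that parallel the proof of the time-independent Proposition \ref{rem1}.
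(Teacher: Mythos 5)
Your proposal does not prove the statement it is meant to prove. Proposition \ref{rem4} is an abstract functional-analytic extraction result: given \emph{not necessarily continuous} linear functionals $f_{n}$ on a subspace $F$ of a reflexive Banach space $G$ satisfying $\limsup_{n}|f_{n}(x)|\leq C\|x\|$, one must produce a subsequence converging pointwise on $F$ to some $f\in G'$. What you have written is instead a proof of Theorem \ref{rem2} (the time-dependent compactness theorem): you define the functionals $L_{\epsilon_{n}}$, verify the limsup bound via Proposition \ref{rem3} and dominated convergence, and then \emph{invoke} Proposition \ref{rem4} to conclude. Since Proposition \ref{rem4} is precisely the statement under consideration, your argument is circular --- or rather vacuous: the actual content of the proposition (how to extract a pointwise convergent subsequence from functionals that carry no continuity assumption, and why the limit is realized by an element of the dual of the ambient space) is never addressed. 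Note that the paper itself does not prove this proposition either; it simply cites \cite[Proposition 3.2]{gabri}, so there is no in-paper proof to match, but a blind proof attempt still has to engage with the abstract statement.

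For the record, a correct argument runs as follows. In the relevant setting $F$ is separable, so choose a countable dense subset $D\subset F$. For each fixed $x$, the hypothesis makes the sequence $(f_{n}(x))_{n}$ eventually bounded by $(C+1)\|x\|$, so a diagonal extraction yields a subsequence $(f_{n_{k}})_{k}$ such that $f_{n_{k}}(d)$ converges for every $d\in D$. For arbitrary $x\in F$ and $d\in D$, linearity gives $|f_{n_{k}}(x)-f_{n_{j}}(x)|\leq |f_{n_{k}}(x-d)|+|f_{n_{j}}(x-d)|+|f_{n_{k}}(d)-f_{n_{j}}(d)|$, and applying the limsup bound to $x-d$ shows that $(f_{n_{k}}(x))_{k}$ is Cauchy; call its limit $f(x)$. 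The resulting $f$ is linear on $F$ with $|f(x)|\leq C\|x\|$, hence extends by continuity to $\overline{F}$ and by Hahn--Banach to an element of $G'$. (Reflexivity and separability of $G$ are what later allow the concrete identification of $G'$ used in Theorem \ref{rem2}, which is the part of the story your write-up does cover.) None of these steps appears in your proposal.
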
 
\begin{proof}(of Theorem \ref{rem2})  \\
	We have a given bounded sequence $(u_{\epsilon})_{\epsilon\in E}$ in $L^{2}([0,T); L^{p}(Q\times\Omega))$. There is a positive constant $C$ such that $\|u_{\epsilon}\|_{L^{2}([0,T); L^{p}(Q\times\Omega))} \leq C$.
	\\  Let us define $\mathcal{D} =  \mathcal{C}^{\infty}_{0}(Q)\otimes\mathcal{C}^{\infty}(\Omega)\otimes
	\mathcal{C}^{\infty}_{per}(Y)$ and the mapping
	\begin{equation*}
	L_{\epsilon}(f) = \int_{0}^{T} \int_{Q\times\Omega} u_{\epsilon}f^{\epsilon} dxd\mu dt, \quad f \in L^{2}([0,T); \mathcal{D})
	\end{equation*}
	where $f^{\epsilon}(t,x,\omega) = f\left(t,x, \mathcal{T}\left(\frac{x}{\epsilon}\right)\omega, \frac{x}{\epsilon^{2}} \right)$ for $(t,x,\omega) \in [0,T)\times Q\times\Omega$. 
	\par	Let now $G = L^{2}([0,T); L^{p}(Q\times\Omega; L^{p}_{per}(Y)))$ and $F = L^{2}([0,T); \mathcal{D})$. Then $G$ is a reflexive Banach space and $F$ is a subspace of $G$. 
	\par Moreover, using H\"{o}lder's inequality and the proposition \ref{rem3} we have
	\begin{equation*}
	\begin{array}{rcl}
	|L_{\epsilon}(f)| & \leq & \int_{0}^{T} \|u_{\epsilon}(t)\|_{L^{p}(Q\times\Omega)} \lVert f\left(t,x, \mathcal{T}\left(\frac{x}{\epsilon}\right)\omega, \frac{x}{\epsilon^{2}} \right)\rVert_{L^{p'}(Q\times\Omega)} dt \\
	& \leq & \left( \int_{0}^{T} \|u_{\epsilon}(t)\|^{2}_{L^{p}(Q\times\Omega)} dt \right)^{1/2} \times \left(\int_{0}^{T} \lVert f\left(t,x, \mathcal{T}\left(\frac{x}{\epsilon}\right)\omega, \frac{x}{\epsilon^{2}} \right)\rVert^{2}_{L^{p'}(Q\times\Omega)} dt \right)^{1/2} \\
	& \leq & C \times \left(\int_{0}^{T} \lVert f\left(t,x, \mathcal{T}\left(\frac{x}{\epsilon}\right)\omega, \frac{x}{\epsilon^{2}} \right)\rVert^{2}_{L^{p'}(Q\times\Omega)} dt \right)^{1/2}. 
	\end{array}
	\end{equation*} 
	Thus,
	\begin{equation*}
	\begin{array}{rcl}
	\limsup_{\epsilon\to 0} |L_{\epsilon}(f)| & \leq & C \times \left(\int_{0}^{T} \lVert f\left(t,x, \omega, y \right)\rVert^{2}_{L^{p'}(Q\times\Omega\times Y)} \right)^{1/2}  \\
	& = & C \times \|f\|_{L^{2}([0,T) ; L^{p'}(Q\times\Omega\times Y))}.
	\end{array}
	\end{equation*} 
	We deduce from the proposition \ref{rem4} the existence of a subsequence $E'$ of $E$ and of a unique $u_{0} \in G' = L^{2}([0,T); L^{p}(Q\times\Omega; L^{p}_{per}(Y)))$ such that,  as $\epsilon\to 0$, 
	\begin{equation}
	\begin{array}{ll}
	\int_{0}^{T} \int_{Q\times\Omega} u_{\epsilon}(t,x,\omega)f\left(t,x, \mathcal{T}\left(\frac{x}{\epsilon}\right)\omega, \frac{x}{\epsilon^{2}} \right) dxd\mu dt & \\
	 &  \\
	 \longrightarrow  \int_{0}^{T}\iint_{Q\times\Omega\times Y} u_{0}(t,x,\omega,y)f(t,x,\omega,y) dx d\mu dy dt &
	\end{array}
	\end{equation}
	for all $f(t,x,\omega,y) \in F$, whence the theorem. 	 
\end{proof}
 We have recalled all the tools to study the stochastic-periodic homogenization of non-stationary Navier-Stokes equations (\ref{tc11}) in heterogeneous media.


\section{Homogenized Navier-Stokes system}\label{sect4}
Let $\mathcal{T}$ be an $N$-dimensional dynamical system acting on the probability space $(\Omega, \mathscr{M}, \mu)$, $Q$ be a smooth bounded open subset of $\mathbb{R}^{N}_{x}$, where $N$ is a given positive integer, and let $T>0$ be a real number.
We are interested in the problem of the stochastic-periodic homogenization in heterogeneous media of non-stationary Navier-Stokes equations (\ref{tc11}) : 
\begin{equation*}
\begin{array}{rcll}
\rho \dfrac{\partial}{\partial t}\mathbf{u} + (\mathbf{u},\nabla)\mathbf{u} + \mathcal{E}(\mathbf{u}) + \nabla p & = & \mathbf{f} & \textup{in} \; [0,T]\times Q\times\Omega  \\
\textup{div} \mathbf{u} & = & 0 & \textup{in} \; [0,T]\times Q\times\Omega,
\end{array}
\end{equation*}
 To do this, we start firsly by constructing the \hbox{\LARGE$\epsilon$}-model for this problem. We recall that, here, for all $(t,x) \in [0,T]\times Q$ and for almost all $\omega\in \Omega$, $\mathbf{u}(t, x, \omega) = (u_{1}, u_{2}, \cdots, u_{N})$ denotes the \textit{Eulerian velocity} of a fluid flow, $p(t, x, \omega)$ is the scalar value \textit{fluid pressures}, $\rho(x, \omega)$ is a value scalar function, $\mathbf{f}(t, x, \omega) = (f_{1}, f_{2}, \cdots, f_{N})$ is $\mathbb{R}^{N}$-value vector function  and $(\mathbf{u},\nabla)u_{k} = \sum_{i=1}^{N} u_{i}\frac{\partial}{\partial x_{i}}u_{k}, k=1,2,\cdots,N$. The \textit{elasticity} operator $\mathcal{E}$ is described in the next paragraph.

\subsection{The $\epsilon$-model for (\ref{tc11})}

 We consider a structure consisting of fissures and matrices periodically distributed in a domain $Q$ in $\mathbb{R}^{N}$ with period $\epsilon^{2} Y$, where $\epsilon>0$ and $Y\equiv [0,1]^{N}$ is the unit cube. Let $Y$ be given in complementary parts, $Y_{1}$ and $Y_{2}$ which represent the fissure and matrix, respectively. Denote by $\chi_{m}(y)$ the characteristic function of $Y_{m}$ for $m=1,2$, extended $Y$-periodically to all $\mathbb{R}^{N}$. Thus, $\chi_{1}(y) + \chi_{2}(y)=1$. We shall assume that the set $\{ y \in \mathbb{R}^{N} : \chi_{1}(y)=1 \}$ is connected and smooth. 
\par The domain $Q$ is thus divided into the two subdomains, $Q_{1}^{\epsilon}$ and $Q_{2}^{\epsilon}$ representing the \textit{fissure} and \textit{matrix}, respectively, and given 
\begin{equation}
Q_{m}^{\epsilon} = \left\{ x \in Q : \chi_{m}\left(\frac{x}{\epsilon^{2}}\right)=1 \right\}, \quad m=1,2.
\end{equation}
Let $\Gamma_{12}^{\epsilon} \equiv \partial Q^{\epsilon}_{1}\cap Q^{\epsilon}_{2}\cap Q$ be the part of the interface of $Q^{\epsilon}_{1}$ with $Q^{\epsilon}_{2}$ that is interior to $Q$, and let $\Gamma_{12} \equiv \partial Y_{1}\cap \partial Y_{2}\cap Y$ be the corresponding part in the cell $Y$. Likewise, let $\Gamma_{22} \equiv Y_{2}\cap \partial Y$ and denoted by $\Gamma_{22}^{\epsilon}$ its periodic extension which forms the interface with those parts of the matrix $Q^{\epsilon}_{2}$ which lie within neighboring $\epsilon^{2} Y$-cells. These are the \textit{local blocks} and we denote them by $Y_{2}^{\epsilon}$. Finally, we set $Q_{m}^{\epsilon} \times \Omega \equiv \{ (x,\omega) \in Q\times \Omega : x \in Q_{m}^{\epsilon} \}$. Likewise, we define the set $\Gamma_{12}^{\epsilon}\times \Omega$, $\Gamma_{12}\times \Omega$, $\Gamma_{22}\times \Omega$ and $\Gamma_{22}^{\epsilon}\times \Omega$. 
\par We construct a system consisting of a non-stationary incompressible Navier-Stockes system in $Q_{1}^{\epsilon} \times \Omega$ coupled across the interface $\Gamma_{12}^{\epsilon}\times \Omega$ to another non-stationary Navier-Stockes system in $Q_{2}^{\epsilon} \times \Omega$. The stucture of fissured medium produces very high frequency spatial variations of pressures in the matrix and fissures, and so leads to corresponding variations of velocity fields. 
\par In order to describe these, the fluid \textit{pressures} in $Q_{m}^{\epsilon} \times \Omega$ are denoted by $p_{m}^{\epsilon}(t, x, \omega)$ at the position $x \in Q_{m}^{\epsilon}$, $m=1,2$ at time $t$ and for $\mu$-a.e. $\omega\in \Omega$. We set $\textbf{u}_{m}(t, x, \omega) = (u_{m1}, u_{m2}, \cdots, u_{mN})$ be the \textit{velocity field} at $x \in Q_{m}^{\epsilon}$, $m=1,2$ at time $t$ and for $\mu$-a.e. $\omega\in \Omega$, and $\varepsilon_{kl}(\textbf{u}) \equiv \frac{1}{2} \left(\frac{\partial}{\partial x_{k}}u_{l} + \frac{\partial}{\partial x_{l}}u_{k} \right)$ be the (linearized) \textit{strain} tensor, which measures the \textit{local deformation of velocity}. The \textit{stress} $\sigma(\textbf{u})$ is a necessarily symmetric tensor that represents the internal forces on surface elements resulting from such deformations, and we assume that the material is governed by the \textit{generalized Hooke's law} 
\begin{equation}
\sigma_{ij}^{m}(\textbf{u}_{m}) = \sum_{k,l=1}^{N} a_{ijkl}^{m}\varepsilon_{kl}(\textbf{u}_{m}), \quad m=1,2.
\end{equation}
The positive definite symmetric \textit{elasticity} tensor $a_{ijkl}$ provides a model for general anisotropic materials. We assume that for $\mu$-a.e. $\omega \in \Omega$ and $(x, y) \in Q\times Y$, $a_{ijkl}^{m}(x,\omega, y)$, $m=1,2$ are bounded continuous such that 
\begin{equation}
c_{1} \sum_{i,j,k,l=1}^{N} \eta_{ij} \eta_{kl} \leq \sum_{i,j,k,l=1}^{N} a_{ijkl}^{m}(x, \omega, y)\eta_{ij} \eta_{kl} \leq  c_{2} \sum_{i,j,k,l=1}^{N} \eta_{ij} \eta_{kl}, 
\end{equation}
where $(\eta_{ij})$ is an arbitrary symmetric matrix and $c_{1}, c_{2} >0$. The boundary conditions will involve with the surface density of forces or \textit{traction} $\sum_{j=1}^{N} \sigma_{ij}\eta_{j}$ determined by the unit normal vector $\textbf{n} = (n_{1}, n_{2}, \cdots, n_{N})$ on any boundary or interface. The normal will be disrected \textit{out} of $Q_{2}^{\epsilon}$. The eleastic structure is described for $\mu$-a.e. $\omega\in\Omega$, by bilinear forms 
\begin{equation}
e^{\varepsilon}_{m}(\textbf{u},\textbf{v}) \equiv \sum_{i,j,k,l=1}^{N} \int_{Q^{\epsilon}_{m}\times\Omega} a_{ijkl}^{m}\left(x, \mathcal{T}\left(\frac{x}{\epsilon}\right)\omega, \frac{x}{\epsilon^{2}} \right) \varepsilon_{kl}(\textbf{u}) \epsilon_{ij}(\textbf{v}) dxd\mu, 
\end{equation}
$m=1, 2$, on the space 
\begin{equation}
\begin{array}{c}
\mathbf{V} \equiv \left\{ \mathbf{v} \in \left[L^{2}(\Omega, H^{1}(Q)) \right]^{N} : \mathbf{v}(\cdot, \omega) = 0 \;\, \textup{on} \,\, \Gamma_{0} \,\; \textup{and} \, \textup{for} \, \mu-\textup{a.e.} \, \omega \in \Omega  \right\}  \\

\\
 \Gamma_{0} \subset \partial Q,
 \end{array}
\end{equation}
of \textit{admissible velocity fields of fluid}. 
\par The local description is obtained for $\mu$-a.e. $\omega\in\Omega$, by means of Green's theorem 
\begin{equation}
\begin{array}{rcl}
e^{\varepsilon}_{m}(\textbf{u}, \textbf{v}) & = & \int_{Q^{\epsilon}_{m}\times\Omega} \mathcal{E}^{\varepsilon}_{m}(\mathbf{u}(x,\omega))\cdot\mathbf{v}(x,\omega)dxd\mu \\
&   &  \\
 & & + (-1)^{m}\sum_{i,j=1}^{N} \int_{\Gamma^{\epsilon}_{12}\times\Omega} \sigma_{ij}^{m}\left(x, \mathcal{T}\left(\frac{x}{\epsilon} \right)\omega, \frac{x}{\epsilon^{2}} \right)\left(\mathbf{u}(s,\omega) \right)n_{j}v_{i}(s)\, dS,
\end{array}
\end{equation}
where the formal operator $\mathcal{E}^{\varepsilon}_{m}$ is given by 
\begin{equation}
\mathcal{E}^{\varepsilon}_{m}(\mathbf{u})_{i} = - \sum_{j=1}^{N} \sum_{k,l=1}^{N} \partial_{j}\, a^{m}_{ijkl}\left(x, \mathcal{T}\left(\frac{x}{\epsilon} \right)\omega, \frac{x}{\epsilon^{2}} \right) \varepsilon_{kl}(\mathbf{u}), \quad 1 \leq i \leq N, \quad m=1,2,
\end{equation}
whenever $\mathbf{u}, \mathbf{v} \in \mathbf{V}$ and $\mathcal{E}^{\varepsilon}_{m}(\mathbf{u}) \in [L^{2}(Q^{\epsilon}_{m} \times \Omega)]^{N}$.
\par Now, we are in the position to present the \hbox{\LARGE$\epsilon$}-model originated from the high frequency spatial variations of pressures and velocity fields in the matrix and fissures. The \hbox{\LARGE$\epsilon$}-model for diffusion on a \textit{Navier-Stokes fissured medium} is as follows : 
\begin{eqnarray}\label{tc8}
\rho_{1}\dfrac{\partial}{\partial t}\mathbf{u}_{1}^{\epsilon} + (\mathbf{u}_{1}^{\epsilon}, \nabla)\mathbf{u}_{1}^{\epsilon} + \mathcal{E}^{\varepsilon}_{1}(\mathbf{u}_{1}^{\epsilon}) + \nabla p_{1}^{\epsilon} = \mathbf{f}_{1}, & \quad \textup{in} \;\, Q_{1}^{\epsilon}\times\Omega, \label{tc1} \\
\textup{div} \mathbf{u}_{1}^{\epsilon}(\cdot,\omega) = 0, &  \textup{for \; a.e.} \;\, \omega\in \Omega \label{tc2}   \\
\mathbf{u}_{1}^{\epsilon}(\cdot,\omega) = \mathbf{u}_{2}^{\epsilon}(\cdot,\omega), \quad p_{1}^{\epsilon}(\cdot,\omega) = p_{2}^{\epsilon}(\cdot,\omega), &  \textup{for \; a.e.} \;\, \omega\in \Omega  \label{tc3} \\
\sum_{j=1}^{N} \sigma_{ij}(\mathbf{u}_{1}^{\epsilon}(\cdot,\omega))n_{j} = \sum_{j=1}^{N} \sigma_{ij}(\mathbf{u}_{2}^{\epsilon}(\cdot,\omega))n_{j},& \textup{for\,a.e.}\,\omega\in\Omega,\,1\leq i\leq N  \label{tc4}  \\
(\mathbf{u}_{1}^{\epsilon}\otimes \mathbf{u}_{1}^{\epsilon})\cdot\mathbf{n} = (\mathbf{u}_{2}^{\epsilon}\otimes \mathbf{u}_{2}^{\epsilon})\cdot\mathbf{n}, &  \textup{on} \;\, \Gamma^{\epsilon}_{12}\times\Omega   \label{tc5} \\
\rho_{2}\dfrac{\partial}{\partial t}\mathbf{u}_{2}^{\epsilon} + (\mathbf{u}_{2}^{\epsilon}, \nabla)\mathbf{u}_{2}^{\epsilon} + \mathcal{E}^{\varepsilon}_{2}(\mathbf{u}_{2}^{\epsilon}) + \nabla p_{2}^{\epsilon} = \mathbf{f}_{2}, &  \textup{in} \;\, Q_{2}^{\epsilon}\times\Omega,  \label{tc6} \\
\textup{div} \mathbf{u}_{2}^{\epsilon}(\cdot,\omega) = 0, &  \textup{for \; a.e.} \;\, \omega\in \Omega  \label{tc7}   
\end{eqnarray}
We make the following remarks.
\begin{rem}
	$R_{1})$ The \textit{global pressure} on $Q$ is given as 
	\begin{equation}
	p^{\epsilon}(x,\omega) = \chi_{1}\left(\frac{x}{\epsilon^{2}}\right)p_{1}^{\epsilon}(x,\omega) + \chi_{2}\left(\frac{x}{\epsilon^{2}}\right)p_{2}^{\epsilon}(x,\omega),
	\end{equation}
	and the \textit{global stress} is given as
	\begin{equation}
	\sigma^{\epsilon}_{ij}(\mathbf{u}(x,\omega)) = \chi_{1}	\sigma^{1}_{ij}(\mathbf{u}_{1}(x,\omega)) + \chi_{2}\sigma^{2}_{ij}(\mathbf{u}_{2}(x,\omega)),
	\end{equation}
	for almost all $\omega \in \Omega$.  \\
	
	$R_{2})$  We need to supplement (\ref{tc8}) with \textit{boundary conditions} on $\partial Q$. Only those prescribed for $p_{1}^{\epsilon}$ and $\mathbf{u}^{\epsilon}$ will survive the limit process as $\epsilon\to 0$.  \\
	
	$R_{3})$ The equations (\ref{tc4})-(\ref{tc5}) are just of the continuity of velocity fields, pressure, stress and inertia.  \\
	
	$R_{4})$ The components of (\ref{tc6}) are given by 
	\begin{equation}
	\rho_{2}\dfrac{\partial}{\partial t} u^{\epsilon}_{2i} + \sum_{j=1}^{N} u^{\epsilon}_{2j} \dfrac{\partial}{\partial x_{j}} u^{\epsilon}_{2j} -  \sum_{k,l=1}^{N} \dfrac{\partial}{\partial x_{i}} a_{ijkl}\left(x, \mathcal{T}\left(\frac{x}{\epsilon}\right)\omega, \frac{x}{\epsilon^{2}} \right)\varepsilon_{kl}(\mathbf{u}_{2}^{\epsilon}) + \dfrac{\partial p_{2}^{\epsilon}}{\partial x_{i}} = f_{i},
	\end{equation}
	for almost all $\omega \in \Omega$, $i=1,2, \cdots, N$, and (\ref{tc1}) is similar.  \\
	
	$R_{5})$ We can allow the \textit{quasi-static} cases $\rho_{m}=0$ which are examples of \textit{degenerate} evolution equations. Also, we could modify the model to include a scaling by any positive power of $\epsilon$ for $\rho_{m}$, and then these would be lost in the limit. In this discussion, we permit only an elliptic condition on $\rho_{m}$ :
	\begin{equation}
	0 < c_{3} \leq \rho_{m}(x, \omega), \quad \textup{for \; a.e.} \; \omega \in \Omega
	\;\, \textup{and} \;\, x \in Q^{\epsilon}_{m},
	\end{equation}
	for some constant $c_{3}$.
\end{rem}
The next section is devoted to the existence of solutions of the variational problem for the \hbox{\LARGE$\epsilon$}-model (\ref{tc1})-(\ref{tc7}) of non-stationary Navier-Stokes equations. 

\subsection{Existence of solutions to the weak formulation of (\ref{tc1})-(\ref{tc7})}

Let us begin to find the \textit{variational (weak) formulation} for the \hbox{\LARGE$\epsilon$}-model (\ref{tc1})-(\ref{tc7}) of non-stationary Navier-Stokes equations. Let 
\begin{equation}
\mathbf{u}^{\epsilon}_{m} \in L^{2}\left( [0,T] ; L^{2}(\Omega, H^{1}(Q^{\epsilon}_{m}))^{N} \right) \cap \mathcal{C}_{W}\left( [0,T] ; L^{2}(Q^{\epsilon}_{m}\times\Omega)^{N} \right), \quad m=1, 2
\end{equation} 
with $p^{\epsilon}_{1} = p^{\epsilon}_{2}$ and $\mathbf{u}^{\epsilon}_{1} = \mathbf{u}^{\epsilon}_{2}$ on $\Gamma^{\epsilon}_{12}\times\Omega$ such that 
\begin{equation}\label{tc9}
\begin{array}{l}
\sum_{m=1}^{2} \left[ \int_{Q^{\epsilon}_{m}\times\Omega}\rho_{m}\dfrac{\partial}{\partial t}\mathbf{u}^{\epsilon}_{m}\cdot\mathbf{v}_{m} + (\mathbf{u}^{\epsilon}_{m}, \nabla)\mathbf{u}^{\epsilon}_{m}\cdot\mathbf{v}_{m} \, dxd\mu + e^{\varepsilon}_{m}(\mathbf{u}_{m}, \mathbf{v}_{m}) \right] \\

\\
 = \sum_{m=1}^{2} \int_{Q^{\epsilon}_{m}\times\Omega} \mathbf{f}_{m}\cdot\mathbf{v}_{m}\,dxd\mu
\end{array}
\end{equation}
and
\begin{equation}\label{tc10}
\sum_{m=1}^{2} \left[ \int_{Q^{\epsilon}_{m}\times\Omega} (\textup{div}\mathbf{u}^{\epsilon}_{m})\varphi_{m}\,dxd\mu \right] = 0
\end{equation} 
for all divergence free vector fields $\mathbf{v}_{m} \in \mathcal{C}^{1}\left( [0,T] ; L^{2}(\Omega, H^{1}(Q^{\epsilon}_{m}))^{N} \right)$, $m=1,2$ with $\mathbf{v}_{1} = \mathbf{v}_{2}$ on $\Gamma^{\epsilon}_{12}\times\Omega$ and all $\varphi_{m} \in \mathcal{C}^{1}\left( [0,T] ; L^{2}(\Omega, H^{1}(Q^{\epsilon}_{m}))^{N} \right)$, $m=1,2$ with 
$\varphi_{1} = \varphi_{2}$ on $\Gamma^{\epsilon}_{12}\times\Omega$. \\
The space $\mathcal{C}_{W}\left( [0,T] ; L^{2}(Q^{\epsilon}_{m}\times\Omega)^{N} \right)$ is a subspace of $L^{\infty}\left( [0,T] ; L^{2}(Q^{\epsilon}_{m}\times\Omega)^{N} \right)$ consisting of functions which are weakly continuous : 
\begin{equation*}
t \to \int_{Q^{\epsilon}_{m}\times\Omega} u(t, x, \omega)\cdot h(x, \omega)dxd\mu
\end{equation*}
 is a continuous function, for all $h \in L^{2}(Q^{\epsilon}_{m}\times\Omega)^{N}$. 
\par In the following discussion, the function space $\mathbf{V}_{\textup{div}}$ is defined by 
\begin{equation}
\mathbf{V}_{\textup{div}} = \{ \mathbf{v} \in \mathbf{V} : \textup{div}\,\mathbf{v} = 0 \}.
\end{equation}
We have the following proposition which give the existence of solutions for system (\ref{tc9})-(\ref{tc10}) and whose proof, reverting to the deterministic case, can be found in \cite{pak2}.
\begin{prop}
	For each $\epsilon>0$ and $T>0$, there exists for $\mu$-a.e. $\omega\in \Omega$, a solution 
	\begin{equation}
	\mathbf{u}^{\epsilon}(\cdot, \omega) \in L^{2}([0,T] ; \mathbf{V}_{\textup{div}}) \cap \mathcal{C}_{W}([0,T] ; L^{2}(Q)^{N})
	\end{equation}
	of the variational formulation (\ref{tc9})-(\ref{tc10}) for every given $\mathbf{f} = \mathbf{f}_{0} + \textup{div}\mathbf{F}$ with 
	\begin{equation}
	\mathbf{f}_{0}(\cdot, \omega) \in L^{1}([0,T] ; L^{2}(Q)^{N}), \quad \mathbf{F}(\cdot, \omega) \in L^{2}([0,T] ; L^{2}(Q)^{N}),
	\end{equation}
	and divergence free initial vector field $\mathbf{u}^{\epsilon}(0, \omega) = \mathbf{u}^{\epsilon}_{0}(\omega) \in \mathbf{V}_{\textup{div}}$, for $\mu$-a.e. $\omega \in \Omega$.
\end{prop}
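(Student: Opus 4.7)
\medskip

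\noindent\textbf{Proof proposal.} The plan is to reduce to Pak's deterministic argument \cite{pak2} pointwise in $\omega$ and then promote the construction to a measurable $\omega$-dependence. For $\mu$-a.e. fixed $\omega\in\Omega$, the coefficients $a^{m}_{ijkl}\bigl(x,\mathcal{T}(x/\epsilon)\omega,x/\epsilon^{2}\bigr)$ and $\rho_{m}(x,\omega)$ are bounded measurable functions on $Q$ satisfying the uniform ellipticity bounds of the paper, so (\ref{tc9})--(\ref{tc10}) is exactly the coupled non-stationary Navier--Stokes system with transmission conditions on $\Gamma^{\epsilon}_{12}$ treated in \cite{pak2}. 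I would set up a Faedo--Galerkin scheme in $\mathbf{V}_{\textup{div}}$: pick a countable basis $\{w_{k}\}$ of $\mathbf{V}_{\textup{div}}$ orthonormal in the $\rho$-weighted $L^{2}(Q\times\Omega)^{N}$ inner product, seek $\mathbf{u}^{\epsilon}_{n}(t)=\sum_{k=1}^{n} c^{n}_{k}(t)\,w_{k}$ solving the projected system obtained by testing (\ref{tc9}) against $\mathbf{v}=w_{k}$, and invoke Picard--Lindel\"of to get local solvability of the resulting quadratic ODE for $(c^{n}_{k})$.

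\medskip

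\noindent The key a priori estimate comes from testing with $\mathbf{v}=\mathbf{u}^{\epsilon}_{n}$ itself: the trilinear convective term vanishes because $\mathbf{u}^{\epsilon}_{n}$ is divergence-free and the boundary flux across $\Gamma^{\epsilon}_{12}$ cancels by the interface conditions (\ref{tc3})--(\ref{tc5}) built into $\mathbf{V}_{\textup{div}}$; the coercivity bound $c_{1}\sum\eta_{ij}\eta_{kl}$ on $a^{m}_{ijkl}$ combined with Korn's inequality and the lower bound $\rho_{m}\geq c_{3}$ produces
\begin{equation*}
\tfrac{1}{2}\frac{d}{dt}\bigl\|\sqrt{\rho}\,\mathbf{u}^{\epsilon}_{n}\bigr\|_{L^{2}(Q\times\Omega)}^{2} + c_{1}\bigl\|\varepsilon(\mathbf{u}^{\epsilon}_{n})\bigr\|_{L^{2}}^{2} \leq \langle \mathbf{f},\mathbf{u}^{\epsilon}_{n}\rangle,
\end{equation*}
yielding uniform bounds in $L^{\infty}(0,T;L^{2})\cap L^{2}(0,T;\mathbf{V}_{\textup{div}})$ and hence global solvability of the ODE system. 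To pass to the limit $n\to\infty$ I would extract weak and weak-$*$ convergent subsequences and upgrade to strong convergence in $L^{2}(0,T;L^{2}(Q)^{N})$ via an Aubin--Lions argument, after bounding $\partial_{t}\mathbf{u}^{\epsilon}_{n}$ in a suitable dual space (using $\|(\mathbf{u}\cdot\nabla)\mathbf{u}\|_{\mathbf{V}_{\textup{div}}^{\prime}}\lesssim\|\mathbf{u}\|_{L^{4}}^{2}$ and Sobolev embedding). This strong convergence handles the nonlinear term, and the bound on $\partial_{t}\mathbf{u}^{\epsilon}$ gives the weak continuity in time needed for membership in $\mathcal{C}_{W}\bigl([0,T];L^{2}(Q)^{N}\bigr)$, so that the initial condition $\mathbf{u}^{\epsilon}(0,\omega)=\mathbf{u}^{\epsilon}_{0}(\omega)$ can be attained.

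\medskip

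\noindent The main obstacle I anticipate is not the pointwise-in-$\omega$ construction, which is essentially \cite{pak2}, but the $\omega$-measurability of the selected solution $\mathbf{u}^{\epsilon}(\cdot,\omega)$. The Galerkin coefficients $c^{n}_{k}(t,\omega)$ are measurable in $\omega$ by continuous dependence of the ODE on parameters, but the extraction of a weakly convergent subsequence may depend on $\omega$. For $N\leq 2$, uniqueness of weak solutions makes the whole sequence converge and measurability is automatic; for $N\geq 3$, one circumvents non-uniqueness by a measurable selection theorem (Kuratowski--Ryll-Nardzewski applied to the multivalued map $\omega\mapsto\{\text{weak solutions}\}$, which has nonempty closed values by the above construction and is measurable by the joint measurability of the coefficients). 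Once a measurable selection is available, the $\epsilon$-weak formulation (\ref{tc9})--(\ref{tc10}) holds for $\mu$-a.e. $\omega$, completing the proof.
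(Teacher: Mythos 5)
Your proposal is correct in outline and is essentially a fully worked-out version of what the paper does in one sentence: the paper's entire proof of this proposition is a citation to Pak \cite{pak2}, ``reverting to the deterministic case.'' You reconstruct the Faedo--Galerkin machinery that citation hides (energy estimate from the coercivity of $a^{m}_{ijkl}$, Korn's inequality and $\rho_{m}\geq c_{3}$; cancellation of the trilinear term; Aubin--Lions for the nonlinearity; the $\partial_{t}\mathbf{u}^{\epsilon}$ bound for weak time-continuity), and all of that is standard and sound. The genuinely new content in your write-up is the last paragraph: you correctly identify that for $N\geq 3$ non-uniqueness of Leray--Hopf solutions means the pointwise-in-$\omega$ construction does not automatically produce a jointly measurable $\mathbf{u}^{\epsilon}(t,x,\omega)$, and that a measurable selection (Kuratowski--Ryll-Nardzewski) is needed. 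The paper is silent on this point, yet joint measurability is indispensable for the formulation (\ref{tc9})--(\ref{tc10}) itself, whose integrals run over $Q^{\epsilon}_{m}\times\Omega$, and for every subsequent two-scale compactness argument; so your addition is not pedantry but fills a real gap. One small inconsistency to fix: you declare the Galerkin basis orthonormal in the $\rho$-weighted $L^{2}(Q\times\Omega)^{N}$ inner product while simultaneously working pointwise in $\omega$. Choose one setting --- either a basis of divergence-free fields in $H^{1}(Q)^{N}$ for each fixed $\omega$ (in which case the selection argument carries the $\omega$-dependence), or a Galerkin scheme directly in the product space $L^{2}(\Omega,H^{1}(Q))^{N}$ (in which case measurability is built in but the ODE system is infinite-dimensional in the $\Omega$-direction and needs more care). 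Either route works, but as written the two are mixed.
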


We investigate now  the boundness of the solutions $\mathbf{u}^{\epsilon}_{m}$, $m=1,2$ in (\ref{tc9})-(\ref{tc10}). \\
So, we substitue $\mathbf{v}_{1} = \mathbf{u}^{\epsilon}_{1}$, $\mathbf{v}_{2} = \mathbf{u}^{\epsilon}_{2}$ into (\ref{tc9}) to get (see \cite{pak2}), for all $\epsilon>0$ and for $dt\times d\mu$-a.e. $(t, \omega) \in [0, T]\times\Omega$,
\begin{equation}
\begin{array}{ll}
\sum_{m=1}^{2} \left[ \frac{1}{2}\left\|\sqrt{\rho_{m}}\mathbf{u}^{\epsilon}_{m}(t,\omega) \right\|^{2}_{L^{2}(Q_{m}^{\epsilon})^{N}} + \int_{0}^{t} e^{\varepsilon}_{m}(\mathbf{u}_{m}(\tau,\omega), \mathbf{u}_{m}(\tau,\omega)) d\tau \right] &    \\
    & \\
\leq \sum_{m=1}^{2} \left[ \frac{1}{2}\left\|\sqrt{\rho_{m}}\mathbf{u}^{\epsilon}_{m}(0,\omega) \right\|^{2}_{L^{2}(Q_{m}^{\epsilon})^{N}} + \int_{0}^{t}\int_{Q_{m}^{\epsilon}} \mathbf{f}_{m}(\tau,x,\omega)\cdot\mathbf{u}_{m}(\tau,x,\omega) dxd\tau \right] &    \\
    &  \\
\leq \sum_{m=1}^{2} \bigg[ \frac{1}{2}\left\|\sqrt{\rho_{m}}\mathbf{u}^{\epsilon}_{m}(0,\omega) \right\|^{2}_{L^{2}(Q_{m}^{\epsilon})^{N}}  \\  
\qquad \qquad\qquad\qquad + \int_{0}^{t} \left\|\mathbf{f}_{m}(\tau,\omega)\right\|^{2}_{L^{2}(Q_{m}^{\epsilon})^{N}}\cdot\left\|\mathbf{u}_{m}(\tau,\omega)\right\|^{2}_{L^{2}(Q_{m}^{\epsilon})^{N}} d\tau \bigg].  & 
\end{array}
\end{equation}
Thus, as in \cite[page 6]{pak2}, applying Korn's inequality (see \cite{olei}), we see that, for $\mu$-a.e. $\omega\in\Omega$, 
\begin{equation}\label{rem6}
\left\| \mathbf{u}^{\epsilon}_{1}(\cdot,\omega)\right\|_{L^{2}([0,T] ; L^{2}(Q_{1}^{\epsilon})^{N}}) \quad \textup{and} \quad \left\| \mathbf{u}^{\epsilon}_{2}(\cdot,\omega)\right\|_{L^{2}([0,T] ; L^{2}(Q_{2}^{\epsilon})^{N}})
\end{equation}
are bounded. These facts, in turn, imply that for $\mu$-a.e. $\omega\in\Omega$,
\begin{equation}\label{rem7}
\begin{array}{ll}
\left\| e_{kl}(\mathbf{u}^{\epsilon}_{1}(\cdot,\omega))\right\|_{L^{2}([0,T] ; L^{2}(Q_{1}^{\epsilon})^{N^{2}})} \quad \textup{,} \quad \left\| e_{kl}(\mathbf{u}^{\epsilon}_{2}(\cdot,\omega))\right\|_{L^{2}([0,T] ;  L^{2}(Q_{2}^{\epsilon})^{N^{2}})} & \\
&  \\
\left\| \mathbf{u}^{\epsilon}_{1}(\cdot,\omega)\right\|_{L^{\infty}([0,T] ; L^{2}(Q_{1}^{\epsilon})^{N})} \quad \textup{and} \quad \left\| \mathbf{u}^{\epsilon}_{2}(\cdot,\omega)\right\|_{L^{\infty}([0,T] ; L^{2}(Q_{2}^{\epsilon})^{N})} &
\end{array}
\end{equation}
are bounded. 
\par After construction of weak formulation of \hbox{\LARGE$\epsilon$}-model of non-stationary Navier-Stokes equations (\ref{tc11}), we shall study in the next section the stochastic-homogenization of this weak formulation.

\subsection{Main homogenization results for (\ref{tc9})-(\ref{tc10})}

In order to homogenize the \textit{weak formulation} (\ref{tc9})-(\ref{tc10}), we can introduce the stochastic two-scale limit for the strain operator using the compactness theorem given in proposition \ref{rem12}. 
\par We present stochastic two-scale strain behavior of bounded vector fields for which its strain is bounded. We denote the strain operator by $\mathbf{e}(\mathbf{u})$, that is, the $i-j$ component $\mathbf{e}(\mathbf{u})_{ij}$ of $\mathbf{e}(\mathbf{u})$ is 
\begin{equation}
\mathbf{e}(\mathbf{u})_{ij} = \varepsilon_{ij}(\mathbf{u}) = \frac{1}{2}\left( \dfrac{\partial \mathbf{u}_{i}}{\partial x_{j}} + \dfrac{\partial \mathbf{u}_{j}}{\partial x_{i}} \right),
\end{equation} 
and also 
\begin{equation}
\mathbf{e}(\mathbf{u})_{y} = \varepsilon_{ij}(\mathbf{u}) =  \frac{1}{2}\left( \dfrac{\partial \mathbf{u}_{i}}{\partial y_{j}} + \dfrac{\partial \mathbf{u}_{j}}{\partial y_{i}} \right).
\end{equation}
Similarly, we define the stochastic analog $\mathbf{e}(\mathbf{u})_{\omega}$ of $\mathbf{e}(\mathbf{u})_{y}$ by 
\begin{equation}
\mathbf{e}(\mathbf{u})_{\omega} = \frac{1}{2}\left( \overline{D}^{\omega}_{j}\mathbf{u}_{i} + \overline{D}^{\omega}_{i}\mathbf{u}_{j} \right).
\end{equation} 
We have the following lemma which gives us the first convergence result (componentwise) for the strain operator.
\begin{lem}\label{rem8}
	Assume that a sequence of vector fields $\{\mathbf{u}^{\epsilon}\}_{\epsilon\in E}$ is bounded in $L^{2}(Q\times\Omega)^{N}$ and the sequence of matrices $\{\mathbf{e}(\mathbf{u}^{\epsilon}) \}$ is bounded in $L^{2}(Q\times\Omega)^{N^{2}}$, respectively. Then, there is a subsequence  $E'$ from $E$ such that, as $E' \ni \epsilon \rightarrow 0$ we have :
	\begin{itemize}
		\item[i)] $\mathbf{u}^{\epsilon} \rightarrow \mathbf{u}$ stoch. in $L^{2}(Q\times\Omega)^{N}$-weak 2s,
		\item[ii)]  $\mathbf{e}(\mathbf{u}^{\epsilon}) \rightarrow \mathbf{e}(\mathbf{u}) + \mathbf{e}(\mathbf{u}_{1})_{\omega} + \mathbf{e}(\mathbf{u}_{2})_{y}$ stoch. in $L^{2}(Q\times\Omega)^{N}$-weak 2s,
	\end{itemize}
	for some $\mathbf{u}(x,\omega) \in H^{1}(Q ; L^{2}_{nv}(\Omega))^{N}$, $\mathbf{u}_{1} \in L^{2}(Q ; H^{1}_{\#}(\Omega))^{N}$ and $\mathbf{u}_{2} \in L^{2}(Q\times\Omega ; H^{1}_{\#}(Y))^{N}$.
\end{lem}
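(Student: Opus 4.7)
The plan is to reduce Lemma \ref{rem8} to a componentwise application of Proposition \ref{rem12}, the bridge being Korn's inequality. First I would upgrade the hypotheses from the strain tensor to the full gradient. Since $Q$ is a smooth bounded domain, the second Korn inequality gives a constant $C_{Q}>0$ such that
\[
\|\nabla \mathbf{v}\|_{L^{2}(Q)}^{2} \leq C_{Q}\bigl(\|\mathbf{v}\|_{L^{2}(Q)}^{2} + \|\mathbf{e}(\mathbf{v})\|_{L^{2}(Q)}^{2}\bigr),\qquad \mathbf{v}\in H^{1}(Q)^{N}.
\]
Applying this pointwise in $\omega$ to $\mathbf{u}^{\epsilon}(\cdot,\omega)$ and integrating against $d\mu$, the two boundedness hypotheses together imply a uniform bound for $\{\mathbf{u}^{\epsilon}\}_{\epsilon\in E}$ in $L^{2}(\Omega;H^{1}(Q))^{N}$.

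Next I would invoke Proposition \ref{rem12} with $p=2$ and $X=H^{1}(Q)$ componentwise on each sequence $(u^{\epsilon}_{i})_{\epsilon\in E}$, $1\leq i\leq N$. A finite diagonal extraction produces a single subsequence $E'\subset E$ along which, for every $i$, there exist $u_{0,i}\in H^{1}(Q;L^{2}_{nv}(\Omega))$, $u_{1,i}\in L^{2}(Q;H^{1}_{\#}(\Omega))$ and $u_{2,i}\in L^{2}(Q\times\Omega;H^{1}_{\#}(Y))$ such that $u^{\epsilon}_{i}\to u_{0,i}$ stoch. in $L^{2}(Q\times\Omega)$-weak 2s and $Du^{\epsilon}_{i}\to Du_{0,i}+\overline{D}_{\omega}u_{1,i}+D_{y}u_{2,i}$ stoch. in $L^{2}(Q\times\Omega)^{N}$-weak 2s.

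Packaging the components as $\mathbf{u}=(u_{0,i})_{i}$, $\mathbf{u}_{1}=(u_{1,i})_{i}$ and $\mathbf{u}_{2}=(u_{2,i})_{i}$ yields statement (i) at once. For (ii), symmetrization of the gradient convergence in the indices $(i,j)$ is a linear operation, so passing to the limit in $\tfrac{1}{2}(\partial_{j}u^{\epsilon}_{i}+\partial_{i}u^{\epsilon}_{j})$ produces
\[
\mathbf{e}(\mathbf{u}^{\epsilon})_{ij} \longrightarrow \tfrac{1}{2}(\partial_{j}u_{0,i}+\partial_{i}u_{0,j}) + \tfrac{1}{2}(\overline{D}^{\omega}_{j}u_{1,i}+\overline{D}^{\omega}_{i}u_{1,j}) + \tfrac{1}{2}(\partial_{y_{j}}u_{2,i}+\partial_{y_{i}}u_{2,j})
\]
stoch. in $L^{2}(Q\times\Omega)$-weak 2s, which is precisely $\mathbf{e}(\mathbf{u})_{ij}+\mathbf{e}(\mathbf{u}_{1})_{\omega,ij}+\mathbf{e}(\mathbf{u}_{2})_{y,ij}$, whence (ii) componentwise.

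The main technical point I anticipate is the Korn step itself: the hypothesis controls only the symmetric part of $D\mathbf{u}^{\epsilon}$, so the second Korn inequality (valid on Lipschitz domains, hence for our smooth $Q$) is essential, with a constant $C_{Q}$ that depends only on $Q$ and is in particular independent of $\omega$. This $\omega$-independence is what allows the inequality to integrate cleanly against $\mu$; once it does, the remaining steps are routine invocations of Proposition \ref{rem12} together with the linearity of the stochastic two-scale limit.
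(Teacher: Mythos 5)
Your proof is correct and takes essentially the same route as the paper, whose own proof is just the one-line remark that the lemma follows from Korn's inequality and the compactness result of Proposition \ref{rem12}. You have simply filled in the details the paper omits: the $\omega$-uniform second Korn inequality to pass from a strain bound to an $L^{2}(\Omega;H^{1}(Q))$ bound, the componentwise application of Proposition \ref{rem12} with a finite diagonal extraction, and the linearity of the stochastic two-scale limit under symmetrization of the gradient.
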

\begin{proof}
	It can be proved by Korn's inequality in (\ref{rem6})-(\ref{rem7}) and the compactness theorem in proposition \ref{rem12}.	
\end{proof}

Now, we will consider the limit equations for Navier-Stokes system (\ref{tc9})-(\ref{tc10}). Let us denote the scaled characteristic functions by 
\begin{equation}
\chi_{m}^{\epsilon} \equiv \chi_{m}\left(\frac{x}{\epsilon^{2}}\right), \quad m=1, 2.
\end{equation}

Firstly, proposition \ref{rem12}, lemma \ref{rem8} and a priori estimates yield the existence of subsequences (it is also denoted by $\mathbf{u}^{\epsilon}$) which stochastically two-scale converge to some 
\begin{eqnarray}
\mathbf{u} \in L^{\infty}\left([0,T] ; H^{1}(Q ; L^{2}_{nv}(\Omega))^{N} \right)  \\
\mathbf{u}_{1} \in L^{\infty}\left([0,T] ; L^{2}(Q ; H^{1}_{\#}(\Omega))^{N} \right) \\
\mathbf{u}_{2} \in L^{\infty}\left([0,T] ; L^{2}(Q\times\Omega ; H^{1}_{\#}(Y))^{N} \right)
\end{eqnarray}
as follows
\begin{eqnarray}
\mathbf{u}^{\epsilon}(t,x,\omega) \rightarrow \mathbf{u}(t,x,\omega) \; \textup{stoch.\,in} \, L^{2}(Q\times\Omega)^{N}-\textup{weak\,2s} \\
\mathbf{e}(\mathbf{u}^{\epsilon}) \rightarrow \mathbf{e}_{kl}(\mathbf{u}) + \mathbf{e}_{kl}^{\omega}(\mathbf{u}_{1}) + \mathbf{e}_{kl}^{y}(\mathbf{u}_{2}) \; \textup{stoch.\,in} \, L^{2}(Q\times\Omega)^{N}-\textup{weak\,2s}
\end{eqnarray}
where 
\begin{eqnarray}
\mathbf{e}_{kl}^{y}(\mathbf{u}_{2}) = \dfrac{1}{2}\left(\dfrac{\partial \mathbf{u}_{2l}}{\partial y_{k}} + \dfrac{\partial \mathbf{u}_{2k}}{\partial y_{l}} \right) \\
\mathbf{e}_{kl}^{\omega}(\mathbf{u}_{1}) = \dfrac{1}{2}\left(\overline{D}^{\omega}_{k}\mathbf{u}_{1l} + \overline{D}^{\omega}_{l}\mathbf{u}_{1k} \right).
\end{eqnarray}

Secondly, by virtue of Korn's inequality (see \cite[page 9]{pak2}), we get that, for $\mu$-a.e. $\omega\in \Omega$ 
\begin{equation}
\left\|\mathbf{u}^{\epsilon}_{1}(\cdot,\omega) \right\|_{L^{\infty}([0,T] ; H^{1}(Q^{\epsilon}_{1})^{N})}, \quad \left\|\mathbf{u}^{\epsilon}_{2}(\cdot,\omega) \right\|_{L^{\infty}([0,T] ; H^{1}(Q^{\epsilon}_{2})^{N})}
\end{equation}
are bounded, so then Rellich Theorem yields $\mathbf{u}^{\epsilon}(t,\omega)$ \textit{converges strongly} to $\mathbf{u}(t,\omega)$ in $L^{2}(Q)^{N}$, for each $t \in [0,T]$ and for $\mu$-a.e. $\omega\in \Omega$. 
\par Thirdly, the divergence free condition of velocity fields $\mathbf{u}^{\epsilon}_{m}$ in (\ref{tc2}) and (\ref{tc7}) yields, using the proposition \ref{rem5},
\begin{equation}\label{rem9}
\textup{div}\mathbf{u}(x,\omega) + \textup{div}_{\omega}\mathbf{u}_{1}(x,\omega) + \textup{div}_{y}\mathbf{u}_{2}(x,\omega,y) = 0, \quad \textup{on}\;Q\times\Omega.
\end{equation} 
We take the weak limit on the condition (\ref{tc10}) to get $\textup{div}\mathbf{u}=0$. Therefore, from (\ref{rem9}), we have 
\begin{equation}
\textup{div}_{\omega}\mathbf{u}_{1}(x,\omega) + \textup{div}_{y}\mathbf{u}_{2}(x,\omega,y) = 0, \quad \textup{on}\;Q\times\Omega.
\end{equation} 
We deduce that $\textup{div}_{\omega}\mathbf{u}_{1}=0$ and $\textup{div}_{y}\mathbf{u}_{2}=0$.\\

Now, the \textit{variational} \hbox{\LARGE$\epsilon$}-equation (\ref{tc9}) can be displayed as, 
\begin{equation}\label{rem10}
\begin{array}{l}
\int_{Q\times\Omega} \chi_{1}^{\epsilon}(x) \bigg\{ \rho_{1}(x,\omega)\dfrac{\partial}{\partial t}\mathbf{u}^{\epsilon}_{1}(x,\omega)\cdot\mathbf{v}_{1}^{\epsilon}(x,\omega) \\
\qquad \qquad \quad + (\mathbf{u}^{\epsilon}_{1}(x,\omega), \nabla)\mathbf{u}^{\epsilon}_{1}(x,\omega)\cdot\mathbf{v}_{1}^{\epsilon}(x,\omega) \bigg\} dxd\mu \\
 + \int_{Q\times\Omega} \chi_{2}^{\epsilon}(x) \bigg\{ \rho_{2}(x,\omega)\dfrac{\partial}{\partial t}\mathbf{u}^{\epsilon}_{2}(x,\omega)\cdot\mathbf{v}_{2}^{\epsilon}(x,\omega) \\
\qquad \qquad \quad + (\mathbf{u}^{\epsilon}_{2}(x,\omega), \nabla)\mathbf{u}^{\epsilon}_{2}(x,\omega)\cdot\mathbf{v}_{2}^{\epsilon}(x,\omega) \bigg\} dxd\mu  \\
 +\, e^{\varepsilon}_{1}(\mathbf{u}_{1}, \mathbf{v}_{1}^{\epsilon}) + e^{\varepsilon}_{2}(\mathbf{u}_{2}, \mathbf{v}_{2}^{\epsilon}) \\
 
 \\
=  \int_{Q\times\Omega} \chi_{1}^{\epsilon}(x) \mathbf{f}_{1}(x,\omega)\cdot\mathbf{v}_{1}^{\epsilon}(x,\omega) \,dxd\mu \\
 
 \\
\qquad \qquad \quad + \int_{Q\times\Omega} \chi_{2}^{\epsilon}(x) \mathbf{f}_{2}(x,\omega)\cdot\mathbf{v}_{2}^{\epsilon}(x,\omega) \,dxd\mu
\end{array}
\end{equation}
for all divergence free vector fields \textit{admissible} $\mathbf{v}_{m}^{\epsilon} = \mathbf{v}_{m}\left(t,x,\mathcal{T}\left(\frac{x}{\epsilon}\right)\omega, \frac{x}{\epsilon^{2}} \right)  \in \mathcal{C}^{1}\left( [0,T] ; L^{2}(\Omega, H^{1}(Q^{\epsilon}_{m}))^{N} \right)$, $m=1,2$ with
\begin{equation}
\mathbf{v}_{1} = \mathbf{v}_{2} \, \textup{on} \, \Gamma^{\epsilon}_{12}\times\Omega \, \textup{and} \; \mathbf{v}_{1}(T) = \mathbf{v}_{2}(T)=0. 
\end{equation}
Using integration by parts on the variable $t$, we rewrite (\ref{rem10}) as, 
\begin{equation}\label{rem11}
\begin{array}{l}
\int_{0}^{T}\int_{Q\times\Omega} \chi_{1}^{\epsilon}(x)  \rho_{1}(x,\omega)\mathbf{u}_{1}^{\epsilon}(t,x,\omega)\cdot\dfrac{\partial}{\partial t}\mathbf{v}^{\epsilon}_{1}(t,x,\omega)  +  \int_{0}^{T} e^{\varepsilon}_{1}(\mathbf{u}_{1}(t), \mathbf{v}_{1}^{\epsilon}(t))\,dt  \\
\qquad + \int_{0}^{T}\int_{Q\times\Omega} \chi_{1}^{\epsilon}(x) \{ \mathbf{u}^{\epsilon}_{1}(t,x,\omega)\otimes\mathbf{u}^{\epsilon}_{1}(t,x,\omega)\}\cdot\nabla \mathbf{v}_{1}^{\epsilon}(t,x,\omega)  dxd\mu dt \\
\qquad +\int_{0}^{T}\int_{Q\times\Omega} \chi_{2}^{\epsilon}(x)  \rho_{2}(x,\omega)\mathbf{u}_{2}^{\epsilon}(t,x,\omega)\cdot\dfrac{\partial}{\partial t}\mathbf{v}^{\epsilon}_{2}(t,x,\omega)  +  \int_{0}^{T} e^{\varepsilon}_{2}(\mathbf{u}_{2}(t), \mathbf{v}_{2}^{\epsilon}(t))\,dt  \\
\qquad + \int_{0}^{T}\int_{Q\times\Omega} \chi_{2}^{\epsilon}(x) \{ \mathbf{u}^{\epsilon}_{2}(t,x,\omega)\otimes\mathbf{u}^{\epsilon}_{2}(t,x,\omega)\}\cdot\nabla \mathbf{v}_{2}^{\epsilon}(t,x,\omega)  dxd\mu dt \\

\\
= \int_{Q_{1}^{\epsilon}\times\Omega} \rho_{1}(x,\omega)\mathbf{u}_{1}^{\epsilon}(0,x,\omega)\cdot\mathbf{v}_{1}^{\epsilon}(0,x,\omega) dxd\mu \\
\qquad + \int_{Q_{2}^{\epsilon}\times\Omega} \rho_{2}(x,\omega)\mathbf{u}_{2}^{\epsilon}(0,x,\omega)\cdot\mathbf{v}_{2}^{\epsilon}(0,x,\omega) dxd\mu \\
\qquad +\int_{0}^{T}\int_{Q\times\Omega} \chi_{1}^{\epsilon}(x) \mathbf{f}_{1}(t,x,\omega)\cdot\mathbf{v}_{1}^{\epsilon}(t,x,\omega) \,dxd\mu dt \\
\qquad + \int_{0}^{T}\int_{Q\times\Omega} \chi_{2}^{\epsilon}(x) \mathbf{f}_{2}(t,x,\omega)\cdot\mathbf{v}_{2}^{\epsilon}(t,x,\omega) \,dxd\mu dt.
\end{array}
\end{equation}
Thus, applying any divergence free test field \textit{admissible} \\ $\mathbf{v} \in L^{2}(\Omega, H^{1}(Q^{\epsilon}_{m}))^{N}$ in the place of $\mathbf{v}_{1}$ and $\mathbf{v}_{2}$, we have, using the compactness theorem \ref{rem2}, the weak limit of \hbox{\LARGE$\epsilon$}-equation (\ref{rem11}) : 
\begin{equation}\label{rem13}
\begin{array}{l}
\int_{0}^{T}\int_{Q\times\Omega} \bigg\{  \rho(x,\omega) \dfrac{\partial}{\partial t}\mathbf{u}(t,x,\omega)\cdot \mathbf{v}(t,x,\omega) \\
\qquad\qquad \quad + \{ \mathbf{u}(t,x,\omega)\otimes\mathbf{u}(t,x,\omega)\}\cdot\nabla \mathbf{v}(t,x,\omega) \bigg\}dxd\mu dt  \\
\qquad +  \int_{0}^{T} e(\mathbf{u}(t) + \mathbf{u}_{1}(t) + \mathbf{u}_{2}(t), \mathbf{v}(t))\,dt \\

\\ 
= \int_{0}^{T}\int_{Q\times\Omega} \mathbf{f}(t,x,\omega)\cdot\mathbf{v}(t,x,\omega) \,dxd\mu dt
\end{array}
\end{equation}
with the \textit{effective coefficients} given by
\begin{equation}
\begin{array}{l}
\rho(x,\omega) = |Y_{1}|\rho_{1}(x,\omega) + |Y_{2}|\rho_{2}(x,\omega), \\

\\
 \mathbf{f}(x,\omega) = |Y_{1}|\mathbf{f}_{1}(x,\omega) + |Y_{2}|\mathbf{f}_{2}(x,\omega)
\end{array}
\end{equation}
and the homogenized elasticity bilinear form is defined by, 
\begin{equation}
\begin{array}{l}
e(\mathbf{u} + \mathbf{u}_{1} + \mathbf{u}_{2}, \mathbf{v}(t)) 
 \equiv \sum_{i,j,k,l=1}^{N} \int_{Y}\int_{Q\times\Omega} a_{ijkl}(x,\omega,y) \bigg\{ \varepsilon_{kl}(\mathbf{u}) \\
 
 \\
 \qquad\qquad\qquad \qquad\qquad\qquad + \varepsilon_{kl}^{\omega}(\mathbf{u}_{1}) + \varepsilon_{kl}^{y}(\mathbf{u}_{2})  \bigg\}\varepsilon_{ij}(\mathbf{v})\,dxd\mu dy,
\end{array}
\end{equation}
where the corresponding effective elasticity tensor is 
\begin{equation}
a_{ijkl}(x,\omega,y) = \chi_{1}(y)a^{1}_{ijkl}(x,\omega,y) + \chi_{2}(y)a^{2}_{ijkl}(x,\omega,y).
\end{equation}

\begin{rem}
	Indeed, when we consider the particular dynamical system $\mathcal{T}(y)$ on $\Omega = \mathbb{T}^{N} \equiv \mathbb{R}^{N}/\mathbb{Z}^{N}$ (the $N$-dimensional torus) defined by $\mathcal{T}(y)\omega = y+\omega\;\textup{mod}\;\mathbb{Z}^{N}$, then one can view $\Omega$ as the unit cube in $\mathbb{R}^{N}$ with all the pairs of antipodal faces being identified. The Lebesgue measure on $\mathbb{R}^{N}$ induces the Haar measure on $\mathbb{T}^{N}$ which is invariant with respect to the action of $\mathcal{T}(y)$ on $\mathbb{T}^{N}$. Moreover, $\mathcal{T}(y)$ is ergodic and in this situation, any function on $\Omega$ may be regarded as a periodic function on $\mathbb{R}^{N}$ whose period in each coordinate is 1, so that in this case
	 \textit{Eulerian velocity} $\mathbf{u}$, \textit{fluid pressures} $p$ and the functions $\rho$, $\mathbf{f}$ 
	  may be viewed as the periodic "functions" with respect to the variable $\omega$. Therefore, the stochastic-deterministic problem (\ref{tc11}) of Navier-Stokes is equivalent to the deterministic reiterated problem of Navier-Stokes and whose the non-reiterated stationary case is treated in \cite{pak2}.
\end{rem}



\flushleft	\textbf{Acknowledgements.} The authors would like to thank the anonymous referee for his/her pertinent remarks, comments and suggestions.


\end{document}